\def \R {{\mathbb R}}
\def \N {{\mathbb N}}
\def \Z {{\mathbb Z}}
\def \C {{\mathbb C}}
\def \al {\alpha}
\def \la {\lambda}
\def \ve {\varepsilon}
\def \vt {\vartheta}
\def \vp {\varphi}
\newcommand{\sphere}{\mathbb{S}^{d-1}} 
\newcommand{\cerchio}{\mathbb{S}^{1}} 
\newcommand{\sset}{\subseteq} 
\newcommand{\uguale}{\stackrel{.}{=}} 
\newcommand{\wconv}{\rightharpoonup} 
\newcommand{\ninf}{\stackrel[n\to+\infty]{}{\longrightarrow}} 
\newtheorem{theorem}{Theorem}[section]
\newtheorem{lemma}[theorem]{Lemma}
\newtheorem{proposition}[theorem]{Proposition}
\newtheorem{remark}[theorem]{Remark}
\newtheorem{corollary}[theorem]{Corollary}
\newtheorem*{mtheorem}{Main Theorem}
\title{Minimal collision arcs asymptotic to central configurations}
\author{Vivina Barutello, Gian Marco Canneori and Susanna Terracini}
\address{Dipartimento di Matematica ``G. Peano''
	\newline\indent
	Universit\`a degli Studi di Torino
	\newline\indent
	 Via Carlo Alberto 10, 10123 Torino, Italy\\}
\email{vivina.barutello@unito.it}
\email{gianmarco.canneori@unito.it}
\email{susanna.terracini@unito.it}
\date{\today} 
\keywords{$N$-body problem, collapsing trajectories, minimal solutions, Sundman's function}
\subjclass[2010] {
70F16, 
70G75 
(70F10, 
70F15, 
37C70) 
}
\begin{document}

\maketitle
\tikzset{-<-/.style={decoration={
			markings,
			mark=at position .6 with {\arrow{<}}},postaction={decorate}}}
\begin{abstract}
We are concerned with the analysis of finite time collision trajectories for a class of singular anisotropic homogeneous potentials of degree $-\alpha$, with $\alpha\in(0,2)$ and their lower order perturbations. It is well known that, under reasonable generic assumptions, the asymptotic normalized configuration converges to a central configuration. Using McGehee coordinates, the flow can be extended to the \emph{collision manifold} having central configurations as stationary points, endowed with their stable and unstable manifolds. We focus on the case when the asymptotic central configuration is a global minimizer of the potential on the sphere: our main goal is to show that, in a rather general setting, the local stable manifold coincides with that of the initial data of minimal collision arcs.
This characterisation may be extremely useful in building complex trajectories with a broken geodesic method. The proof takes advantage of the generalised  Sundman's monotonicity formula. 
\end{abstract}
\section{Introduction and main result}
Many papers of the recent literature are focused  on the variational properties of expanding (parabolic or hyperbolic) or collapsing trajectories for $N$-body and $N$-centre type problems (see e.g. \cite{ BTVplanar, BTV, BoDaTe,BoDaPa,BHPT,MaVe}), framing them in a Morse-theoretical perspective.
Indeed, in addition to answering natural questions about the nature of these motions, the variational approach is a fruitful tool when building complex trajectories exploiting gluing techniques (cf. \cite{BCT_prep}).  This application is the original motivation for this work, although we believe that the obtained result is interesting in itself. In order to state it in detail, we need some preliminaries on the motion near collision  for a class of singular anisotropic homogeneous potentials  of degree $-\alpha$, with $\alpha\in(0,2)$ (and their lower order perturbations). 

We consider the Newtonian system of ordinary differential equations
\begin{equation}\label{eq:an_kep}
	\ddot{x}(t)=\nabla V(x(t)),
\end{equation}
whose solutions satisfy the energy relation 
\begin{equation}\label{eq:energy}
    \frac12|\dot{x}(t)|^2-V(x(t))=h,
\end{equation}
with $h\in\R$. It is possible to reword equations \eqref{eq:an_kep}-\eqref{eq:energy} using the Hamiltonian formalism, choosing, as usual, the total energy to be the Hamiltonian function. Since we will study fixed-energy trajectories, it makes sense to restrict our discussion to the $(2d-1)$-dimensional \emph{energy shell}
	\[
	\mathcal{H}_h=\left\lbrace(q,p)\in T\R^d:\ \frac12|p|^2-V(q)=h\right\rbrace\simeq\R^{2d-1},
	\]
and thus, every solution of \eqref{eq:an_kep}-\eqref{eq:energy} can be seen as an evolving pair $(q,p)\in\mathcal{H}_h$ which solves
\begin{equation}\label{eq:ham_sys}
\begin{cases}
\dot{q}=p \\
\dot{p}=\nabla V(q).
\end{cases}
\end{equation}

Our potential  $V$ is a \emph{not too singular} perturbation of a $-\al$-homogeneous potential $S$.  To be precise, for $d\geq 2$, let us introduce a function $U\in\mathcal{C}^{2}(\sphere)$ such that
\begin{equation}\label{hyp:s_star}
\tag{$s^*0$} 
\begin{cases}
\exists\, s^*\in\sphere\ \text{s.t.}\ U(s)\geq U(s^*)>0,\ \forall s\in\sphere; \\
\exists\,\delta,\mu>0\,:\  \forall s\in\sphere\ \text{s.t.}\ |s-s^*|<\delta\ \implies U(s)-U(s^*)\geq\mu|s-s^*|^2,\;
\end{cases}
\end{equation}
and then consider a potential $V\in\mathcal{C}^1(\R^d\setminus\{0\})$ such that
\begin{equation}\label{hyp:V}
\tag{$V0$} 
\begin{cases}
V=S+W; \\
S\in\mathcal{C}^2(\R^d\setminus\{0\})\ \text{and}\ S(x)=|x|^{-\al}U(x/|x|),\ \text{for some}\ \al\in(0,2); \\
 \lim\limits_{|x|\to 0}|x|^{\al'}(W(x)+|x|\cdot|\nabla W(x)|)=0,\ \text{for some}\ \al'<\al.
\end{cases}
\end{equation}
 Here, $S$ has a singularity in the origin and it represents a generalization of the anisotropic Kepler potential introduced by Gutzwiller (\cite{Gutzwiller73,Gutzwiller77,Gutzwiller81}). On the other hand, the perturbation term $W$ morally vanishes when $|x|\to 0$. In particular, recalling that a \emph{central configuration} for $S$ is a unitary vector which is a critical point of the restriction of $S$ to the sphere, the assumptions \eqref{hyp:s_star} on $U$ state that $s^*$ is a globally minimal non-degenerate central configuration for $S$.

We are concerned with the behaviour of those trajectories which collide with the attraction centre in finite time (\emph{collision solutions}). It is well known that, as $|q(t)|\to 0$, the normalized configuration $q(t)/|q(t)|$ has infinitesimal distance from the set of central configurations of $S$. In particular, if this set is discrete, any collision trajectory admits a limiting central configuration $\hat{s}\in\sphere$ (see for instance \cite{BFT2,FT2003,BTV,Sund1913,WintBook}), that is
\begin{equation}\label{eq:limit}
\lim\limits_{t\to T}\frac{q(t)}{|q(t)|}=\hat{s},
\end{equation}
for some $T>0$. Given a central configuration $\hat{s}\in\sphere$ for $S$, we define the set of initial conditions for \eqref{eq:ham_sys} in $\mathcal{H}_h$ which evolve to collision with limiting configuration $\hat{s}$ 
\[
\mathcal{S}_h(\hat{s})=\{(q,p)\in\mathcal{H}_h:\ \mbox{the solution of}\ \eqref{eq:ham_sys},\ \mbox{with}\ q(0)=q,\ p(0)=p,\ \mbox{satisfies}\ \eqref{eq:limit}\}.
\]
The corresponding motion is termed $\hat{s}$-asymptotic trajectory and we want to remark that the above set is non-empty, since the $\hat{s}$-homothetic  trajectory with energy $h$ is entirely contained in it. 

Following McGehee (\cite{McG1974,McG1981}), it is possible to prove the following result (see Sections \ref{sec:coll_man}-\ref{sec:st_man} for a step-by-step proof in the planar unperturbed case), in order to give a dynamical interpretation of the set $\mathcal{S}_h(s^*)$ when $s^*\in\sphere$ satisfies \eqref{hyp:s_star}.

    \begin{lemma}\label{lem:mcgehee} Given $h\in\R$, consider a potential $V\in\mathcal{C}^1(\R^d\setminus\{0\})$  and $s^*\in\sphere$ satisfying respectively  \eqref{hyp:V} and \eqref{hyp:s_star}. Then, there exists a diffeomorphism
    \[
	\begin{aligned}
	\phi\colon&\mathcal{H}_h\to[0,+\infty)\times T\sphere \\
	 &(q,p)\mapsto\phi(q,p)=(r,s,u)	
    \end{aligned}
    \]
    such that, for some $\mathcal{C}^2$-vector field $F\colon[0,+\infty)\times T\sphere\to[0,+\infty)\times T\sphere$ and a certain time rescaling $\tau=\tau(t)$, considering the dynamical system (where ``\,$'$\,'' stands for the derivative with respect to $\tau$)
    \begin{equation}\label{eq:vec}
    (r',s',u')=F(r,s,u),
    \end{equation}
    we have:
    \begin{itemize}
    \item[$(i)$] to a solution $(q,p)=(q(t),p(t))_{t\in[0,T)}\sset\mathcal{H}_h$ of \eqref{eq:ham_sys} there corresponds a solution $(r,s,u)=(r(\tau),s(\tau),u(\tau))_{\tau\geq 0}\sset[0,+\infty)\times T\sphere$ of \eqref{eq:vec}$;$
    \item[$(ii)$] $(0,s^*,0)$ is a hyperbolic equilibrium point for \eqref{eq:vec}$;$
    \item[$(iii)$] there exists a $d$-dimensional stable manifold $\mathcal{W}^S$ for $(0,s^*,0)$, which is locally the graph of a $\mathcal{C}^2$-function $\Psi\colon\mathcal{U}\to T_s\sphere$, where $\mathcal{U}\sset[0,+\infty)\times\sphere$ is a sufficiently small neighbourhood of $(0,s^*)$ and $\Psi(0,s^*)=0$.
    \end{itemize} 
    In other words, defining
    \[
    \mathcal{W}_{loc}^S=\mathcal{W}^S\cap(\mathcal{U}\times\Psi(\mathcal{U})),
    \]    
    it turns out that
    \begin{itemize}
    \item[$(iv)$] in a neighbourhood of the origin, $\mathcal{S}_h(s^*)$ corresponds to $\mathcal{W}_{loc}^S$ through the diffeomorphism $\phi$, so that a $s^*$-asymptotic collision trajectory will be represented by an orbit contained in $\mathcal{W}_{loc}^S$.
    \end{itemize}
    \end{lemma}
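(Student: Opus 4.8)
The plan is to follow McGehee's blow-up and time-rescaling scheme, reducing the statement to the stable manifold theorem for a hyperbolic rest point of a vector field that extends regularly across the collision manifold $\{r=0\}$.

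\emph{Coordinates.} First I would introduce the McGehee variables $r=|q|$, $s=q/|q|\in\sphere$ and the rescaled tangential momentum $u=r^{\al/2}\bigl(p-(p\cdot s)s\bigr)\in T_s\sphere$, together with the auxiliary radial quantity $v=r^{\al/2}(p\cdot s)$. A direct computation turns the energy relation \eqref{eq:energy} into
\[
v^2+|u|^2=2U(s)+2r^{\al}\bigl(W(q)+h\bigr).
\]
Since $2U(s^*)>0$ by \eqref{hyp:s_star}, on the collision branch $v<0$ this identity solves for $v$ as a $\mathcal{C}^2$-function of $(r,s,u)$ in a neighbourhood of $(0,s^*,0)$; eliminating $v$ shows that $\phi\colon(q,p)\mapsto(r,s,u)$ is a local $\mathcal{C}^2$-diffeomorphism onto an open subset of $[0,+\infty)\times T\sphere$, as claimed.

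\emph{The extended field (the main obstacle).} Next I would introduce the Sundman-type time rescaling $\mathrm{d}\tau=r^{-(1+\al/2)}\,\mathrm{d}t$ and rewrite \eqref{eq:ham_sys} in the new variables. Differentiating $q=rs$ and substituting $\ddot q=\nabla V(q)$, with $\nabla S=r^{-\al-1}\bigl(-\al\,U(s)\,s+\nabla_\sphere U(s)\bigr)$, yields
\[
r'=r\,v,\qquad s'=u,\qquad u'=\nabla_\sphere U(s)+\Bigl(\tfrac{\al}{2}-1\Bigr)v\,u+R(r,s,u),
\]
where $R$ collects the contributions of $W$. Using \eqref{hyp:V}, the bound $r^{1+\al}|\nabla W|=o(r^{\al-\al'})$ gives $R=o(1)$ with $R(0,s,u)=0$, so $F$ extends continuously to $\{r=0\}$ and the collision manifold is invariant. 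Proving that this extension is genuinely $\mathcal{C}^2$ (and not merely continuous) across $\{r=0\}$ is the technical heart of the argument and the step I expect to be hardest: it is precisely here that the decay assumptions on $W$ and $\nabla W$ in \eqref{hyp:V} must be exploited, controlling the remainder and its derivatives uniformly as $r\to 0^+$. This establishes the $\mathcal{C}^2$ field $F$ and $(i)$.

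\emph{Hyperbolicity and stable manifold.} With $F$ in hand, $(ii)$ follows by noting that $(0,s^*,0)$ is a rest point (since $\nabla_\sphere U(s^*)=0$) and linearising there. The $r$-row decouples with eigenvalue $v^*:=-\sqrt{2U(s^*)}<0$, while the $(s,u)$-block is
\[
\begin{pmatrix} 0 & I \\ D^2U(s^*) & \bigl(\tfrac{\al}{2}-1\bigr)v^*\,I \end{pmatrix},
\]
whose characteristic polynomial factors as $\prod_{j}\bigl(\la^2-(\tfrac{\al}{2}-1)v^*\la-\sigma_j\bigr)$ over the eigenvalues $\sigma_j$ of the spherical Hessian $D^2U(s^*)$. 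The non-degeneracy in \eqref{hyp:s_star} gives $\sigma_j\ge 2\mu>0$, so each quadratic has roots of opposite sign; hence $(0,s^*,0)$ is hyperbolic with a $d$-dimensional stable subspace (the radial direction plus one stable direction per $\sigma_j$), proving $(ii)$ and the dimension count in $(iii)$. The $\mathcal{C}^2$ stable manifold theorem then produces $\mathcal{W}^S$ tangent to this subspace; since every stable eigenvector has nonvanishing $s$-component (the block eigenvectors satisfy $u=\la\,\xi$ with $\xi$ an eigenvector of $D^2U(s^*)$) and the radial direction completes the projection, the stable subspace maps isomorphically onto the $(r,s)$-space, so $\mathcal{W}^S$ is locally the graph $u=\Psi(r,s)$ of a $\mathcal{C}^2$ map with $\Psi(0,s^*)=0$, giving $(iii)$. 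Finally, for $(iv)$ I would translate \eqref{eq:limit}: in rescaled time, collision with limiting configuration $s^*$ corresponds to $\tau\to+\infty$ with $(r,s)\to(0,s^*)$, and an $\omega$-limit argument (the limit set lies in $\{r=0\}\cap\{s=s^*\}$, is invariant under $s'=u$, hence forces $u\to 0$ and convergence to the rest point) identifies $\mathcal{S}_h(s^*)$ near the origin with the initial data whose forward orbit converges to $(0,s^*,0)$, i.e.\ with $\mathcal{W}^S_{loc}$; pulling back through $\phi$ concludes. A monotonicity control of Sundman type is what guarantees that such orbits remain in the neighbourhood where the local characterisation applies.
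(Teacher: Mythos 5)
Your proposal follows essentially the same route as the paper: McGehee variables $(r,s,u,v)$ with the rescaling $d\tau=r^{-(1+\alpha/2)}dt$, elimination of $v$ on the ingoing branch $v<0$ via the energy relation, linearization at $(0,s^*,0)$ with hyperbolicity and the dimension count $\dim\mathcal{W}^S=d$ coming from the non-degeneracy of $s^*$, and the $\mathcal{C}^2$ stable manifold theorem giving the graph description --- exactly the scheme the paper carries out in Sections \ref{sec:coll_man}--\ref{sec:st_man} (planar case) and sketches in Section \ref{sec:generalizations} for general $d$, where it cites Devaney for the spectral facts you compute by hand. The only slip is the omission of the curvature term $-|u|^2 s$ in the $u'$ equation (compare \eqref{eq:dyns}), which is harmless since it is quadratic in $u$ and does not affect the linearization at the rest point or any subsequent step.
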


Our goal is to establish a link between orbits contained in $\mathcal{W}_{loc}^S$ and collision trajectories which minimize the geometric functional naturally associated with the Hamiltonian system. For this reason, let us introduce the Jacobi-length functional
\[
\mathcal{L}_h(y)=\int_0^T|\dot{y}|\sqrt{h+V(y)},
\]
for $y\in H^1([0,T];\R^d)$ such that $|\dot{y}|>0$ and $h+V(y)>0$. It is well known that a critical point $y$ of $\mathcal{L}_h$ corresponds to a classical solution on $(0,T)$ of \eqref{eq:ham_sys} in $\mathcal{H}_h$ for a certain $T>0$, if $|y(t)|\neq 0$ for every $t\in(0,T)$ (see for instance \cite{ArBook,KnBook,MoMoVe2012}). 

In particular, for a properly chosen $\bar{r}=\bar{r}(h)>0$ and for $q\in B_{\bar{r}}=B_{\bar r}(0)$, introducing the set of collision paths
\[
H_{coll}^{q}=\{y\in H^1([0,T];\R^d):\ y(0)=q,\ y(T)=0,\ |y(t)|<|q|,\ t\in(0,T)\},
\] 
contrary to the case $\alpha\geq2$, when $\mathcal{L}_h$ is never finite on collisions,  when  $\alpha\in(0,2)$, we are able to find at least a minimizer for the Jacobi lenght in the above space. Such a minimizer is not necessarily unique; indeed, any of these minimal paths is associated with the starting velocity $\dot{y}(0)$ of the trajectory. This leads to the construction of the multivalued map 
\[
\begin{aligned}
\mathcal{F}_h\colon &B_{\bar{r}}\to\mathcal{P}(T_{q}\R^d) \\
           &q\mapsto  \mathcal{F}_h(q)=\left\lbrace\dot{y}(0):\ y=\arg\min\limits_{H_{coll}^{q}}\mathcal{L}_h\right\rbrace
\end{aligned}
\]
in which $\mathcal{F}_h(q)$ represents the set of all the initial velocities for which a minimal collision arc exists.

Now, in the fashion of Lemma \ref{lem:mcgehee}, without loss of generality, we can assume that $\mathcal{U}=[0,\bar{r})\times B_{\bar{\delta}}(s^*)$ for some $\bar{\delta}>0$, so that
\[
\mathcal{W}_{loc}^S=\mathcal{W}_{loc}^S(\bar{r},\bar{\delta}).
\]
In this way, our main result consists in showing that, if $\bar{r}$ and $\bar{\delta}$ are sufficiently small, a collision minimizer starting at $q\in B_{\bar{r}}$, with $q/|q|\in B_{\bar{\delta}}(s^*)$, is actually unique and its $\phi$-corresponding orbit is entirely contained in $\mathcal{W}_{loc}^S$. This means that, for such starting points, the set $\mathcal{F}_h(q)$ is not only a singleton, but it verifies $\phi(q,\mathcal{F}_h(q))\in\mathcal{W}_{loc}^S$. For this reason, it makes sense to introduce another \emph{local set} in the phase space, which is \emph{spanned} by all the unique minimizers above mentioned
\[
\mathfrak{M}_h(\bar{r},\bar{\delta})=\left\lbrace\phi(q,\mathcal{F}_h(q)):\ q\in B_{\bar{r}},\ q/|q|\in B_{\bar{\delta}}(s^*)\right\rbrace,
\]
and to state our core result in this way:

\begin{mtheorem}
	Given $h\in\R$, consider a potential $V\in\mathcal{C}^1(\R^d\setminus\{0\})$ and $s^*\in\sphere$ verifying respectively \eqref{hyp:V} and \eqref{hyp:s_star}. Then, there exist $\bar{r}=\bar{r}(h)>0$ and $\bar{\delta}=\bar{\delta}(s^*)>0$ such that
	\[
	\mathcal{W}_{loc}^S(\bar{r},\bar{\delta})=\mathfrak{M}_h(\bar{r},\bar{\delta}).
	\]
\end{mtheorem}

\begin{remark} The assumption \eqref{hyp:s_star} that the minimal central configuration is non-degenerate, though stringent, holds generically. It can be easily lifted in some particular situations, for example in the case of the $-\alpha$-homogeneous $N$-body problem, that is when
\[
V(q_1,\cdots,q_N)=\sum_{i\neq j}\dfrac{m_im_j}{|q_i-q_j|^{\alpha}}\;.
\]
In this case the potential is invariant under common rotations of all the bodies and obviously no central configuration can be non-degenerate. However, our main result still holds true under the assumption of non-degeneration of the $\mathcal{SO}(d)$-orbit of the minimal central configuration under examination. Indeed, using again McGehee change of coordinates and extending the flow on the collision manifold, Lemma \ref{lem:mcgehee} can be rephrased in terms of a normally invariant manifold of stationary points endowed with their stable and unstable (local) manifolds. Given this alteration, the statement and proof easily follow.
\end{remark}

For the sake of a better comprehension and visualization of the proofs, we will carry out our work in a simplified case, which can be easily generalized to the setting introduced above. In particular, from now on we will take into account a planar anisotropic Kepler problem as proposed in \cite{BTVplanar} and we will work in negative energy shells, i.e., we will assume
\begin{itemize}
	\item $d=2$;
	\item $W\equiv 0$;
	\item $h<0$.
\end{itemize}
Useful complementary material needed for the proof in the more general setting will be provided in Section \ref{sec:generalizations}. The paper is organised as follows: Section \S\ref{sec:coll_man} introduces the collision manifold for the planar case and recalls the main features of the extended flow, whereas \S\ref{sec:st_man}  is devoted to the analysis of the extended flow near its critical points. The object of Section \S\ref{sec:var} are Bolza minimizing arcs and their properties, while the Main Theorem will be eventually proved in \S\ref{sec:main} in the unperturbed and planar case, whereas in \S \ref{sec:generalizations} we will discuss the modifications needed to cover the perturbed $d$-dimensional case.

\section{The collision manifold for the planar problem}\label{sec:coll_man}

As aforementioned, we will develop this and the following sections working on the plane and with an unperturbed potential $V$. The following construction, which is the two-dimensional version of Lemma \ref{lem:mcgehee}, exploits a technique firstly introduced by R. McGehee in the study of the collinear 3-body problem (\cite{McG1974, McG1981}) and furthermore employed by Devaney and others for the anisotropic Kepler problem (\cite{DevInvMath1978, DevProgMath1981,Deva1982,SaaHulk81}). Exploiting a space-time change of coordinates, this method consists in attaching a \emph{collision manifold} to the phase space, where the flow can be extended in a suitable way,  having central configurations as stationary points, endowed with their stable and unstable manifolds. In particular, a very similar approach with possibly different time parameterization can be found in \cite{BTVplanar,BTV,HuYu2018}. We shall follow here the Devaney's approach (\cite{DevInvMath1978}). For our purposes, for a point $x\in\R^2$ it makes sense to introduce polar coordinates $x=(q_1,q_2)=(r \cos\vt, r \sin\vt)$, where
\[
r=\sqrt{q_1^2+q_2^2}\geq 0,\qquad\vt=\arctan(q_2/q_1)\in[0,2\pi).
\]
In this way, any $-\al$-homogeneous potential $V\in\mathcal{C}^2(\R^2\setminus\{0\})$ can be written as 
\[
V(x)=r^{-\al}U(\vt),
\]
where $U\in\mathcal{C}^2(\cerchio)$, $U>0$ and
\[
U(\vt)=V(\cos\vt,\sin\vt).
\]         
\paragraph{\textbf{Hypotheses on }$\boldsymbol{V}$:} In this setting, the original assumptions \eqref{hyp:V}-\eqref{hyp:s_star} reduce respectively to:
\begin{equation}\label{hyp:V1}
\tag{$V1$}
\begin{cases}
V\in\mathcal{C}^2(\R^2\setminus\{0\}); \\
V(x)=|x|^{-\al}U(x/|x|),\ \text{with}\ \al\in(0,2)\ \text{and}\ U\in\mathcal{C}^2(\cerchio),
\end{cases}
\end{equation}
and
\begin{equation}\label{hyp:vt_star}
	\tag{$U1$}
	\exists\,\vt^*\in\cerchio\ \text{s.t.}\ U(\vt)\geq U(\vt^*)>0\ \forall\,\vt\in\cerchio\ \text{and}\ U''(\vt^*)>0.
\end{equation}

With these notations, we study the motion and energy equations in the plane
\begin{equation}\label{pb:an_kep}
\begin{cases}
\ddot{x}(t)=\nabla V(x(t)) \\ 
\frac{1}{2}|\dot{x}(t)|^2-V(x(t))=h,
\end{cases}
\end{equation}
with $h<0$. As usual, the conservation of energy forces every solution of \eqref{pb:an_kep} to be included into the \emph{Hill's region}
\[
\mathcal{R}_h=\left\lbrace x\in\R^2\setminus\{0\}:\ V(x)+h\geq 0\right\rbrace.
\]
Now, since
\[
\nabla r=r^{-1}(q_1,q_2),\qquad \nabla\vt=r^{-2}(-q_2,q_1),
\]
we can compute
\[
\nabla V(x)=r^{-\al-2}\left[-\al U(\vt)(q_1,q_2)+U'(\vt)(-q_2,q_1)\right].
\]
In this way, introducing the momentum vector $(p_1,p_2) = (\dot q_1,\dot q_2)$, we can  rewrite equations \eqref{pb:an_kep} as
\begin{equation}\label{eq:polar}
\begin{cases}
\dot q_1 = p_1 \\
\dot q_2 = p_2 \\
\dot p_1 = {r^{-\al-2}}\left[ -U'(\vt)q_2 -\al U(\vt)q_1 \right] \\
\dot p_2 = {r^{-\al-2}}\left[ U'(\vt)q_1 -\al U(\vt)q_2 \right],
\end{cases}
\end{equation}
and
\[
\frac12 \left( p_1^2 + p_2^2\right) - r^{-\al}U(\vt)=h.
\]
If we are not on the boundary of $\mathcal{R}_h$, we have that $|p|\neq0$ and so, for every solution of \eqref{eq:polar}, we can find smooth functions
$z>0$ and $\vp\in[0,2\pi)$ in such a way that
$p_1=r^{-\al/2}z\cos \vp$, $p_2=r^{-\al/2}z\sin\vp$, choosing
\begin{equation}\label{eq:zeta}
z = \sqrt{2U(\vt)+2hr^\al}.
\end{equation}
By standard calculations, equations \eqref{eq:polar} become
\begin{equation}\label{eq:sing}
\begin{cases}
\dot r   = r^{-\al/2}z \cos (\vp-\vt) \\
\dot \vt = r^{-1-\al/2}z\sin (\vp-\vt) \\
\dot z   = r^{-1-\al/2}\left[U'(\vt)\sin(\vp-\vt)+\al h r^\al\cos(\vp-\vt)\right] \\
\dot \vp = \frac{1}{z} r^{-1-\al/2} \left[ U'(\vt)\cos(\vp-\vt) +\al U(\vt) \sin(\vp-\vt)\right]
\end{cases}
\end{equation}
and this system has a singularity when $r=0$, which indeed corresponds to the collision set $\{0\}\sset\R^2$ of problem \eqref{pb:an_kep}. Introducing a new time variable $\tau$ which verifies
\begin{equation}\label{eq:time_sc}
\frac{dt}{d\tau} = zr^{1+\al/2},
\end{equation}
the singularity of \eqref{eq:sing} can be \emph{removed} in order to extend the vector field to the singular boundary $\{r=0\}$. The effect of this rescaling is to \emph{blow-up} the instant of an eventual collision, so that the particle will \emph{virtually never reach} the singularity. In this way, we can rewrite \eqref{eq:sing} as (here ``\,$'$\,'' denotes the derivative with respect to $\tau$)
\[
\begin{cases}
r'   = r z^2 \cos (\vp-\vt) \\
\vt' = z^2 \sin (\vp-\vt) \\
z'   = z \left[U'(\vt)\sin(\vp-\vt)+\al h r^\al\cos(\vp-\vt)\right] \\
\vp' =U'(\vt)\cos(\vp-\vt) +\al U(\vt)\sin(\vp-\vt).
\end{cases}
\]
Moreover, the conservation of energy, together with definition \eqref{eq:zeta}, allows us to eliminate the variable $z$ from the system, and thus to consider the 3-dimensional system
\begin{equation}\label{eq:mcgehee}
\begin{cases}
r'   = 2r(U(\vt)+hr^\al) \cos (\vp-\vt) \\
\vt' = 2(U(\vt)+hr^\al) \sin (\vp-\vt)  \\
\vp' =U'(\vt)\cos(\vp-\vt)+\al U(\vt)\sin(\vp-\vt)
\end{cases}
\end{equation}
which we shortly denote by $(r',\vt',\vp')=F(r,\vt,\vp)$, with $F\colon[0,+\infty)\times T\cerchio\to[0,+\infty)\times T\cerchio$. Since $r'=0$ when $r=0$, the boundary $\{r=0\}$ is an invariant set for the above system. In other words, we can restrict the vector field $F$ to $\{r=0\}$  and study the independent dynamical system
\begin{equation}\label{eq:coll_man}
\begin{cases}
\vt' = U(\vt)\sin (\vp-\vt) \\
\vp' =U'(\vt)\cos(\vp-\vt) +\al U(\vt) \sin(\vp-\vt)
\end{cases}
\end{equation}
which defines a 2-dimensional \emph{collision manifold} and whose stationary points are
\[
(\hat{\vt},\hat{\vt}+k\pi),\ \mbox{with}\ k\in\Z\ \mbox{and}\ U'(\hat{\vt})=0.
\]
If we denote by $JF|_{\{r=0\}}$ the Jacobian matrix of the vector field associated to \eqref{eq:coll_man} and we evaluate it at $(\hat\vt,\hat\vt+k\pi)$, we obtain
\[
JF|_{\{r=0\}}(\hat{\vt},\hat{\vt}+k\pi)=\cos(k\pi)U(\hat{\vt})\begin{pmatrix}
-2 & 2 \\
\frac{U''(\hat{\vt})}{U(\hat{\vt})}-\al & \al
\end{pmatrix},
\]
whose eigenvalues are
\[
\mu^\pm=\frac{1}{2}\cos(k\pi)U(\hat{\vt})\left\lbrace\al-2\pm\left[(\al-2)^2+8\frac{U''(\hat{\vt})}{U(\hat{\vt})}\right]^{\frac{1}{2}}\right\rbrace.
\]
For the sake of completeness, we present here a full characterization of the equilibrium points of \eqref{eq:coll_man}, depending on the value of $U''(\hat\vt)$.

\emph{For an equilibrium point $(\hat\vt,\hat\vt+k\pi)$ of \eqref{eq:coll_man} we can have:
\begin{itemize}
	\item if $U''(\hat{\vt})<-\frac{(\al-2)^2}{8}U(\hat{\vt})$, then $\mu^\pm\in\C\setminus\R$ and thus
	\begin{itemize}
		\item if $k$ is even then $(\hat{\vt},\hat{\vt}+k\pi)$ is a \emph{sink}$;$
		\item if $k$ is odd then $(\hat{\vt},\hat{\vt}+k\pi)$ is a \emph{source}$;$
	\end{itemize}
	\item if $U''(\hat{\vt})=-\frac{(\al-2)^2}{8}U(\hat{\vt})$, then $\mu^-=\mu^+\in\R$ and thus
	\begin{itemize}
		\item if $k$ is even then $(\hat{\vt},\hat{\vt}+k\pi)$ is \emph{asymptotically stable}$;$
		\item if $k$ is odd then 
		$(\hat{\vt},\hat{\vt}+k\pi)$ is \emph{unstable}$;$
	\end{itemize}
	\item if $U''(\hat{\vt})>-\frac{(\al-2)^2}{8}U(\hat{\vt})$, then 
	\begin{itemize}
		\item if $U''(\hat{\vt})>0$, then $\mu^-\cdot\mu^+<0$ and thus $(\hat{\vt},\hat{\vt}+k\pi)$ is a \emph{saddle}$;$
		\item if $U''(\hat{\vt})=0$, then one of the eigenvalues is zero and thus
		\begin{itemize}
			\item if $k$ is even then $(\hat{\vt},\hat{\vt}+k\pi)$ is a \emph{stable degenerate node}$;$
			\item if $k$ is odd then $(\hat{\vt},\hat{\vt}+k\pi)$ is a \emph{unstable degenerate node}$;$    		
		\end{itemize}
		\item if $0>U''(\hat{\vt})>-\frac{(\al-2)^2}{8}U(\hat{\vt})$, then $\mbox{sign}(\mu^-)=\mbox{sign}(\mu^+)$ and thus
		\begin{itemize}
			\item if $k$ is even then $(\hat{\vt},\hat{\vt}+k\pi)$ is a \emph{stable 2-tangents node}$;$
			\item if $k$ is odd then $(\hat{\vt},\hat{\vt}+k\pi)$ is a \emph{unstable 2-tangents node}.    	
		\end{itemize}
	\end{itemize}
\end{itemize}
}
Now, we assume again \eqref{hyp:V1}-\eqref{hyp:vt_star} and so, in particular, $(\vt^*,\vt^*+k\pi)$ is a saddle equilibrium point for \eqref{eq:coll_man}. Coming back to the 3-dimensional system \eqref{eq:mcgehee}, the Jacobian of $F$ evaluated in the stationary points $(0,\vt^*,\vt^*+k\pi)$ is
\[
JF(0,\vt^*,\vt^*+k\pi)=U(\vt^*)\cos(k\pi)\begin{pmatrix}
2 & 0 & 0 \\
0 & -2 & 2 \\
0 & \frac{U''(\vt^*)}{U(\vt^*)}-\al & \al
\end{pmatrix}.
\]
In this way, we note that the $r$-eigenvalue is always non-zero, i.e.
\begin{itemize}
	\item if $k$ is odd the orbit \emph{enters} in the collision manifold;
	\item if $k$ is even the orbit \emph{leaves} the collision manifold.
\end{itemize}
Since we are interested in studying the behaviour of a trajectory which approaches the singularity, we focus our attention on the case in which $k$ is odd. With the choice of $k=1$, the Jacobian becomes
\[
JF(0,\vt^*,\vt^*+\pi)=\begin{pmatrix}
-2U(\vt^*) & 0 & 0 \\
0 & 2U(\vt^*) & -2U(\vt^*) \\
0 & \al U(\vt^*)-U''(\vt^*) & -\al U(\vt^*)
\end{pmatrix},
\]
the eigenvalues are
\[
\begin{aligned}
\la_r&= -2U(\vt^*)<0 \\
\la^\pm&= \frac{2-\al}{2}U(\vt^*)\pm\frac12\sqrt{(2-\al)^2U(\vt^*)^2+8U(\vt^*)U''(\vt^*)}\gtrless0 
\end{aligned}
\]
and the relative eigendirections are
\[
\begin{aligned}
v_r&=(1,0,0) \\
v^\pm&=\left(0,1,\frac12 +\frac\al4 \pm\frac14\sqrt{(2-\al)^2+8\frac{U''(\vt^*)}{U(\vt^*)}}\right).
\end{aligned}
\]
\begin{remark}
The orbits of the stable manifold associated to the equilibrium point $(0,\vt^*,\vt^*+\pi)$ will enter in the collision manifold being tangent to $v_r$ or $v^-$, depending on the sign of the quantity 
\[
\frac{U''(\vt^*)}{U(\vt^*)}-(4-\al)
\]
(for instance, if negative, the tangency will be with respect to $v^-$). In particular, for the classical Kepler problem in which $\al=1$, $U(\vt^*)=1$ and $U''(\vt^*)=0$, the above quantity is always negative.
\end{remark}

\section{The stable manifold}\label{sec:st_man}

In the previous section we have shown the existence of an invariant set for \eqref{eq:mcgehee}, the \emph{collision manifold} $\{r=0\}$. Moreover, from the linearization of the vector field, it follows that the only way for a point in the phase space to evolve \emph{entering} in $\{r=0\}$ is to belong to the stable set of an equilibrium point $(0,\hat{\vt},\hat{\vt}+k\pi)$, with $U'(\hat{\vt})=0$ and $k$ odd. For this reason, in this section we focus our attention on the study of the \emph{stable manifold} of such equilibrium points, with the not restrictive choice of $k=1$. We start our analysis with the case $h=0$, which presents a simplified and clearer structure.

\subsection{Collision orbits for $\boldsymbol{h=0}$}

In this setting, system \eqref{eq:mcgehee} reads
\begin{equation}\label{eq:3d_zero}
\begin{cases}
r'=2rU(\vt)\cos(\vp-\vt) \\
\vt'=2U(\vt)\sin(\vp-\vt) \\
\vp'=U'(\vt)\cos(\vp-\vt)+\al U(\vt)\sin(\vp-\vt)
\end{cases}
\end{equation}
and the set
\[
\{(r,\hat{\vt},\hat{\vt}+\pi):\,r\geq0,\ U'(\hat{\vt})=0\}
\]
is invariant for the system and it gathers all the collision $\hat\vt$-homothetic trajectories. Once $\hat\vt\in\cerchio$ critical point for $U$ is fixed, such a set reduces to a ray which enters the collision manifold in the equilibrium point $(0,\hat\vt,\hat\vt+\pi)$.

\begin{lemma}\label{lem:stable_manif_0}
	Assume \eqref{hyp:V1}-\eqref{hyp:vt_star}. Then, there exist $\delta>0$, a stable manifold $W^s$ for the equilibrium point $(\vt^*,\vt^*+\pi)$ of \eqref{eq:coll_man} and a $\mathcal{C}^2$-function $\Psi\colon(\vt^*-\delta,\vt^*+\delta)\to\cerchio$ such that $\Psi(\vt^*)=\vt^*+\pi$ and for every $\vt\in(\vt^*-\delta,\vt^*+\delta)$
	\begin{itemize}
		\item $(\vt,\vp)\in W^s$ if and only if $\vp=\Psi(\vt)$;		 
		\item $\vp<\vt+\pi$ if $\vt\in(\vt^*-\delta,\vt^*)$ $[$resp. $\vp>\vt+\pi$ if $\vt\in(\vt^*,\vt^*+\delta)$$]$.
	\end{itemize}
    Moreover, $\R_0^+\times W^s$ is the stable manifold of the equilibrium point $(0,\vt^*,\vt^*+\pi)$ for system \eqref{eq:3d_zero}.
\end{lemma}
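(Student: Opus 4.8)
The plan is to produce $W^s$ via the stable manifold theorem applied to the planar collision flow \eqref{eq:coll_man} at the saddle $(\vt^*,\vt^*+\pi)$, to read off its graph structure and tangent slope from the linearisation, and then to lift it to \eqref{eq:3d_zero} exploiting the skew--product structure that is available precisely because $h=0$. Under \eqref{hyp:V1}--\eqref{hyp:vt_star} the point $(\vt^*,\vt^*+\pi)$ is a hyperbolic equilibrium of \eqref{eq:coll_man}: writing $D=(2-\al)^2+8U''(\vt^*)/U(\vt^*)$, the eigenvalues computed above are $\mu_s=\tfrac{U(\vt^*)}2(2-\al-\sqrt D)$ and $\mu_u=\tfrac{U(\vt^*)}2(2-\al+\sqrt D)$, and $\al<2$ together with $U''(\vt^*)>0$ force $\sqrt D>2-\al>0$, so that $\mu_s<0<\mu_u$. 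The stable manifold theorem then provides a one--dimensional local stable manifold $W^s$ through $(\vt^*,\vt^*+\pi)$, tangent there to the stable eigenvector and inheriting the regularity of the field. A direct computation of that eigenvector yields the $(\vt,\vp)$--slope $m=\tfrac12+\tfrac\al4+\tfrac14\sqrt D$, whose $\vt$--component is nonzero; hence $W^s$ is transverse to the vertical direction and is locally a graph $\vp=\Psi(\vt)$ over some $(\vt^*-\delta,\vt^*+\delta)$, with $\Psi(\vt^*)=\vt^*+\pi$ and $\Psi'(\vt^*)=m$.

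For the sign statement I would set $\psi(\vt):=\Psi(\vt)-(\vt+\pi)$, so that $\psi(\vt^*)=0$ and $\psi'(\vt^*)=m-1$. The crucial inequality is $m>1$: from $\sqrt D>2-\al$ one gets $\tfrac\al4+\tfrac14\sqrt D>\tfrac\al4+\tfrac{2-\al}4=\tfrac12$, whence $m>1$ and $\psi'(\vt^*)>0$. Shrinking $\delta$ so that $\Psi'>1$ throughout $(\vt^*-\delta,\vt^*+\delta)$ (possible by continuity of $\Psi'$), the function $\psi$ is strictly increasing and vanishes only at $\vt^*$; this yields $\Psi(\vt)<\vt+\pi$ for $\vt\in(\vt^*-\delta,\vt^*)$ and $\Psi(\vt)>\vt+\pi$ for $\vt\in(\vt^*,\vt^*+\delta)$, exactly the claim.

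For the three--dimensional assertion the key remark is that, when $h=0$, the $\vt$-- and $\vp$--equations in \eqref{eq:3d_zero} do not contain $r$: the flow is a skew--product over the planar flow \eqref{eq:coll_man}, while $r$ obeys the linear equation $r'=2rU(\vt)\cos(\vp-\vt)$, so that $\{r=0\}$ is invariant and $r\ge0$ is preserved. Consequently $\R_0^+\times W^s$ is invariant. Along any of its orbits $(\vt,\vp)\to(\vt^*,\vt^*+\pi)$, whence $\cos(\vp-\vt)\to-1$ while $U(\vt)\ge U(\vt^*)>0$; therefore $r'<0$ for $\tau$ large and $r(\tau)\to0$ exponentially, so every orbit on $\R_0^+\times W^s$ converges to $(0,\vt^*,\vt^*+\pi)$. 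Since $\R_0^+\times W^s$ is a two--dimensional invariant manifold, tangent at the equilibrium to the stable eigenspace $\mathrm{span}\{(1,0,0),(0,1,m)\}$ associated with the two negative eigenvalues $\lambda_r=-2U(\vt^*)$ and $\lambda^-=\mu_s$, uniqueness of the local stable manifold identifies it with the stable manifold of \eqref{eq:3d_zero} at $(0,\vt^*,\vt^*+\pi)$ in the physical region $r\ge0$.

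I expect the delicate point to be the sign statement: one must pin down the tangent slope $m$ and, above all, the direction of the inequality $m>1$, since this is precisely what decides on which side of the homothetic line $\{\vp=\vt+\pi\}$ the stable manifold lies, and the passage from the infinitesimal sign of $\psi$ to a full neighbourhood needs the monotonicity of $\psi$ rather than the linearisation alone. The other point to watch is the regularity $\Psi\in\mathcal C^2$, which requires the collision vector field to be $\mathcal C^2$; with $U\in\mathcal C^2$ the field and the invariant graph are a priori only $\mathcal C^1$, so the stated smoothness should be invoked through the corresponding (higher--regularity) version of the stable manifold theorem.
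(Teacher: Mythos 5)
Your proof is correct and follows the same basic route as the paper: the stable manifold theorem for the restricted flow on the collision manifold, the graph property coming from hyperbolicity, and the lift to \eqref{eq:3d_zero} through the $r$-independence of the $(\vt,\vp)$-equations at $h=0$. It is, however, more careful than the paper's own proof on exactly the points you flag as delicate. First, the paper never verifies the sign statement $\vp\lessgtr\vt+\pi$ at all; your derivation from the slope bound $m=\frac12+\frac\al4+\frac14\sqrt{D}>1$, with $D=(2-\al)^2+8\,U''(\vt^*)/U(\vt^*)$, together with monotonicity of $\psi(\vt)=\Psi(\vt)-\vt-\pi$ on a shrunken interval, supplies the missing argument. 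Second, your pairing of eigenvectors with eigenvalues is the correct one: a direct check (e.g. $\al=1$, $U(\vt^*)=U''(\vt^*)=1$ gives eigenvalues $2$ and $-1$ with eigenvectors of slopes $0$ and $3/2$ respectively) confirms that the stable eigenvalue $\la^-$ carries the eigenvector whose slope has the plus sign in front of $\frac14\sqrt{D}$, whereas the paper's proof asserts tangency of $W^s$ to $v^-$, whose displayed slope is $\frac12+\frac\al4-\frac14\sqrt{D}<1$; with that (crossed) labelling the sign statement of the lemma would come out reversed, so your identification is not a cosmetic difference but what makes the second bullet true. Third, you actually justify $r(\tau)\to0$, via $\cos(\vp-\vt)\to-1$ and exponential decay, while the paper only argues invariance of $\R_0^+\times W^s$ and leaves the convergence of the $r$-component implicit. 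Finally, your regularity caveat is well taken: the field of \eqref{eq:coll_man} involves $U'$, so under $U\in\mathcal{C}^2$ it is only $\mathcal{C}^1$ and the stable manifold theorem yields a priori a $\mathcal{C}^1$ graph; the claimed $\mathcal{C}^2$ smoothness of $\Psi$ genuinely requires more regularity of $U$ (in effect $U\in\mathcal{C}^3$), a gap in the statement itself that the paper's proof does not address.
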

\begin{proof}
	We firstly analyse the 2-dimensional system \eqref{eq:coll_man}, keeping in mind the eigendirections  $v^+$ and $v^-$ computed in the previous section. From the \emph{Stable Manifold Theorem} (see for instance \cite{HirSmaDev},\cite{Teschl_ode}) we have that there exist $W^u,W^s$ $\mathcal{C}^2$-curves on the collision manifold $\{r=0\}$ such that
	\begin{itemize}
		\item $(\vt^*,\vt^*+\pi)\in W^u,W^s$;
		\item $W^s$ is tangent to $v^-$ and $W^u$ is tangent to $v^+$ in $(\vt^*,\vt^*+\pi)$;
		\item for every $(\vt^+,\vp^+)\in W^u$ and $(\vt^-,\vp^-)\in W^s$ we have
		\[
		\lim_{\tau\to\pm\infty}(\vt^\pm(\tau),\vp^\pm(\tau))=(\vt^*,\vt^*+\pi).
		\]
	\end{itemize}	
	In particular, since $(\vt^*,\vt^*+\pi)$ is a hyperbolic equilibrium point, $W^s$ is locally the graph of a $\mathcal{C}^2$-curve $\vp=\vp(\vt)$, i.e.
	\[
	W^s=\left\lbrace (\vt,\vp(\vt)):\ \vt\in(\vt^*-\delta,\vt^*+\delta)\ \mbox{with}\ \delta>0,\ \vp\in\mathcal{C}^2,\ \vp(\vt^*)=\vt^*+\pi\right\rbrace,
	\]
	which is tangent to $v^-$ in $(\vt^*,\vt^*+\pi)$. This is a consequence of the fact that the \emph{local stable manifold} has the same dimension of the \emph{stable eigenspace} (\emph{Hartman-Grobman Theorem}, see for instance \cite{Teschl_ode}).
	
	Now, since the second and third equations of system \eqref{eq:3d_zero} are uncoupled for every $r\geq 0$, if we consider the set
	\[
	\R_0^+\times W^s=\{(r,\vt,\vp):\ r\geq0,\ (\vt,\vp)\in W^s\},
	\]
    defining the flow associated to \eqref{eq:3d_zero} as $\Phi^\tau=\Phi^\tau(r,\vt,\vp)$, we have that for every $(r,\vt,\vp)\in\R_0^+\times W^s$
	\[
	\lim_{\tau\to+\infty}\Phi^\tau(r,\vt,\vp)=(0,\vt^*,\vt^*+\pi).
	\]
	Indeed, $\Phi^\tau(r,\vt,\vp)\in\R_0^+\times W^s$ for every $\tau>0$,  since $\pi_{r=0}F_0(\Phi^\tau(r,\vt,\vp))$ is tangent to $W^s$ for every $\tau>0$, where $F_0$ represents the vector field associated to \eqref{eq:3d_zero}.
\end{proof}

\subsection{Collision orbits for $\boldsymbol{h<0}$}

When $h<0$, we come back again to system \eqref{eq:mcgehee}, which we recall here for the reader's convenience
\[
\begin{cases}
r'=2r(U(\vt)+hr^\al)\cos(\vp-\vt) \\
\vt'=2(U(\vt)+hr^\al)\sin(\vp-\vt) \\
\vp'=U'(\vt)\cos(\vp-\vt)+\al U(\vt)\sin(\vp-\vt).
\end{cases}
\]
The collision manifold $\{r=0\}$ is exactly the same of the zero-energy system and we still have the invariance of the collision homothetic trajectories set
\[
\{(r,\hat{\vt},\hat\vt+\pi):\ r\geq0,\ U'(\hat\vt)=0\}.
\]
However, we point out that for $h<0$ the set $\R_0^+\times W^s$ is not the stable manifold for $(0,\hat{\vt},\hat\vt+\pi)$. Beside that, assuming \eqref{hyp:V1}-\eqref{hyp:vt_star}, the hyperbolicity of the fixed point $(0,\vt^*,\vt^*+\pi)$ still guarantees the existence of a 2-dimensional stable manifold $\mathcal{W}^s$, which contains the homothetic trajectories and the 1-dimensional stable manifold $W^s$.

Now, the dynamical systems \eqref{eq:3d_zero} and \eqref{eq:mcgehee} share the same linearization with respect to the equilibrium point $(0,\vt^*,\vt^*+\pi)$. These means that, below some $r^*>0$, they are topologically equivalent. In particular, we can imagine $\mathcal{W}^s\cap\{r<r^*\}$ as a $\mathcal{C}^2$ $h$-deformation of $[0,r^*)\times W^s$, in which the 1-dimensional components $\{0\}\times W^s$ and $(0,r^*)\times\{\vt^*\}\times\{\vt^*+\pi\}$ stay always fixed.

As a consequence of these discussions, we deduce the following analytic and geometric description of $\mathcal{W}^s$ in a neighbourhood of the equilibrium point, which locally generalizes Lemma \ref{lem:stable_manif_0}.

\begin{lemma}\label{lem:stable_manif}
	Assume \eqref{hyp:V1}-\eqref{hyp:vt_star}. Given $h<0$, there exist $r_{loc}>0$, $\delta_{loc}>0$  and a $\mathcal{C}^2$-function $\Psi\colon[0,r_{loc})\times(\vt^*-\delta_{loc},\vt^*+\delta_{loc})\to\cerchio$ such that $\Psi(0,\vt^*)=\vt^*+\pi$ and for every $(r,\vt)\in(0,r_{loc})\times(\vt^*-\delta_{loc},\vt^*+\delta_{loc})$
	\begin{itemize}
		\item $(r,\vt,\vp)\in\mathcal{W}^s$ if and only if $\vp=\Psi(r,\vt)$;		 
		\item $\vp<\vt+\pi$ if $\vt\in(\vt^*-\delta_{loc},\vt^*)$ $[$resp. $\vp>\vt+\pi$ if $\vt\in(\vt^*,\vt^*+\delta_{loc})$$]$.
	\end{itemize}   
   In other words, we have just characterized locally $\mathcal{W}^s$ as the graph of a function $\Psi$
   \[
   \mathcal{W}_{loc}^s\uguale\left\lbrace (r,\vt,\Psi(r,\vt)):\,r\in[0,r_{loc}),\ \vt\in(\vt^*-\delta_{loc},\vt^*+\delta_{loc})\right\rbrace\sset\mathcal{W}^s.
   \]
\end{lemma}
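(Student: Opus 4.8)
The plan is to realize $\mathcal{W}^s_{loc}$ as a graph over the $(r,\vt)$-coordinates and then fix the sign of $\Psi(r,\vt)-\vt-\pi$ by combining the homothetic orbits with a strict monotonicity argument. First I would record the linear picture at $(0,\vt^*,\vt^*+\pi)$: by the Jacobian computed in Section~\ref{sec:coll_man} the eigenvalues are $\la_r=-2U(\vt^*)<0$ and $\la^-<0<\la^+$, the latter two splitting in sign because $U''(\vt^*)>0$ forces $\la^-\la^+=-2U(\vt^*)U''(\vt^*)<0$. Hence the equilibrium is hyperbolic with a two-dimensional stable eigenspace $E^s=\mathrm{span}\{v_r,v^-\}$ and a one-dimensional unstable one. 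Since $v_r=(1,0,0)$ and the stable eigendirection $v^-=(0,1,s^-)$ project onto the linearly independent vectors $(1,0)$ and $(0,1)$ of the $(r,\vt)$-plane, $E^s$ projects isomorphically onto it; this transversality to the $\vp$-axis is exactly what lets one write $\mathcal{W}^s$ locally as a graph $\vp=\Psi(r,\vt)$.

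Next I would invoke the structural discussion preceding the statement. The stable manifold $\mathcal{W}^s$ exists by hyperbolicity (Stable Manifold Theorem, as already used on \eqref{eq:coll_man}), and below some $r^*>0$ the flow of \eqref{eq:mcgehee} is conjugate to that of \eqref{eq:3d_zero}, so $\mathcal{W}^s\cap\{r<r^*\}$ is a $\mathcal{C}^2$ $h$-deformation of $[0,r^*)\times W^s$ fixing the two distinguished one-dimensional pieces $\{0\}\times W^s$ and $(0,r^*)\times\{\vt^*\}\times\{\vt^*+\pi\}$. Together with the isomorphic projection above this yields, after shrinking to a neighbourhood $[0,r_{loc})\times(\vt^*-\delta_{loc},\vt^*+\delta_{loc})$, a $\mathcal{C}^2$ function $\Psi$ whose graph is $\mathcal{W}^s_{loc}$, with $\Psi(0,\vt^*)=\vt^*+\pi$. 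Restricting to $r=0$ returns precisely the curve $W^s$ of Lemma~\ref{lem:stable_manif_0}, so $\Psi(0,\cdot)$ coincides with the function $\vp(\cdot)$ constructed there.

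For the sign statement I set $g(r,\vt)=\Psi(r,\vt)-\vt-\pi$, a $\mathcal{C}^2$ function with $g(0,\vt^*)=0$, and let two facts drive the conclusion. On one hand, each $\vt^*$-homothetic orbit $(r(\tau),\vt^*,\vt^*+\pi)$ solves \eqref{eq:mcgehee} (indeed $\vt'=\vp'=0$ there since $U'(\vt^*)=0$ and $\sin\pi=0$, while $r'<0$ for small $r$) and tends to the equilibrium, hence lies in $\mathcal{W}^s$; thus $(r,\vt^*,\vt^*+\pi)\in\mathcal{W}^s_{loc}$ gives $\Psi(r,\vt^*)=\vt^*+\pi$, i.e. $g(r,\vt^*)\equiv 0$ on $[0,r_{loc})$. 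On the other hand, by tangency of the graph to $E^s$, $\partial_\vt\Psi(0,\vt^*)$ equals the slope $s^-$ of the stable eigendirection, and $s^-=\tfrac12+\tfrac{\al}{4}+\tfrac14\sqrt{(2-\al)^2+8U''(\vt^*)/U(\vt^*)}>1$, the inequality $\sqrt{(2-\al)^2+8U''(\vt^*)/U(\vt^*)}>2-\al$ being strict precisely because $U''(\vt^*)>0$. Hence $\partial_\vt g(0,\vt^*)>0$, and by continuity of $\partial_\vt g$ I may shrink $r_{loc},\delta_{loc}$ so that $\partial_\vt g>0$ throughout. Fixing $r$ and using $g(r,\vt^*)=0$, strict monotonicity in $\vt$ gives $g(r,\vt)<0$ for $\vt<\vt^*$ and $g(r,\vt)>0$ for $\vt>\vt^*$, which is the claimed inequality.

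I expect the genuinely delicate step to be the $\mathcal{C}^2$ graph representation \emph{up to} the collision boundary $\{r=0\}$. The field \eqref{eq:mcgehee} is not $\mathcal{C}^2$ (nor even $\mathcal{C}^1$) there when $\al<1$, since $\partial_r\vt'$ carries a factor $r^{\al-1}\sin(\vp-\vt)$ that is unbounded off the homothetic line. The linearization is nonetheless well defined at the equilibrium, because those offending terms are multiplied by $\sin(\vp-\vt)=0$ at $(0,\vt^*,\vt^*+\pi)$, which is what legitimizes both the eigenvalue/eigenvector computation and the conjugacy to \eqref{eq:3d_zero}; propagating the $\mathcal{C}^2$ regularity of $\mathcal{W}^s$ all the way to $r=0$ must be argued through the deformation picture rather than by a bare application of the Stable Manifold Theorem. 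Everything after that is a routine continuity-and-monotonicity argument.
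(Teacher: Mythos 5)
Your proposal is correct, and its skeleton coincides with the paper's own (rather informal) justification: hyperbolicity of $(0,\vt^*,\vt^*+\pi)$ yields a two-dimensional stable manifold via the Stable Manifold Theorem; since \eqref{eq:mcgehee} and \eqref{eq:3d_zero} share the same linearization at that point, $\mathcal{W}^s\cap\{r<r^*\}$ is viewed as a $\mathcal{C}^2$ deformation of $[0,r^*)\times W^s$ which keeps the pieces $\{0\}\times W^s$ and the homothetic ray $(0,r^*)\times\{\vt^*\}\times\{\vt^*+\pi\}$ fixed, and this is also where the delicate boundary regularity (the field is not $\mathcal{C}^1$ up to $\{r=0\}$ when $\al<1$) is absorbed --- you flag this point explicitly, which the paper does not. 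Where you genuinely diverge is the second bullet: the paper simply inherits the inequalities $\vp\lessgtr\vt+\pi$ from Lemma \ref{lem:stable_manif_0} through the deformation, whereas you prove them directly, setting $g(r,\vt)=\Psi(r,\vt)-\vt-\pi$, using the invariant homothetic ray to get $g(r,\vt^*)\equiv 0$, and using tangency to the stable eigenspace to get $\partial_\vt g(0,\vt^*)=s^--1>0$, then concluding by continuity and monotonicity in $\vt$. This is more self-contained and quantitative; it also implicitly corrects a sign slip in the paper. Indeed, from the first row of the $(\vt,\vp)$-block of $JF(0,\vt^*,\vt^*+\pi)$, the eigenvector of an eigenvalue $\la$ has slope $1-\la/(2U(\vt^*))$, so the \emph{stable} eigenvalue $\la^-$ carries the slope $\frac12+\frac{\al}{4}+\frac14\sqrt{(2-\al)^2+8U''(\vt^*)/U(\vt^*)}>1$, exactly as you write; the paper's displayed pairing of $v^\pm$ with $\la^\pm$ has the $\pm$ in the third component the wrong way around, and your version (slope strictly greater than $1$ at the stable direction) is the one that makes the sign statement of the lemma work.
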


\begin{remark}\label{rem:top_conj}
	To better understand the meaning of the previous lemma, we can refer to the configurations space the behaviour of a point evolving in $\mathcal{W}_{loc}^s$ and eventually entering in the collision manifold. In particular, Lemma \ref{lem:stable_manif} guarantees the existence of a cone
	\[
	\mathcal{C}=\{q=(q_1,q_2)\in\R^2:\ |q|\leq r_{loc},\ \arctan(q_2/q_1)\in(\vt^*-\delta_{loc},\vt^*+\delta_{loc})\}
	\]
	such that, for every trajectory which starts in $\mathcal{C}$ and reaches the collision being tangent to $\vt^*$, it never leaves $\mathcal{C}$. We would stress that this confinement result is strictly addressed to those orbits which, in the phase space, are contained in $\mathcal{W}_{loc}^s$. Indeed, every collision orbit that is not tangent to $\vt^*$ in the origin is not necessarily contained in the cone $\mathcal{C}$.
\end{remark}

\begin{figure}
	\centering
	\begin{tikzpicture}
	\coordinate (O) at (0,0);
	\coordinate (x) at (0,8);
	\coordinate (-delta) at (-5.3,8.48);    	
	\coordinate (+delta) at (5.3,8.48); 
	\coordinate (p0) at (-4.23,6.78);
	\coordinate (p0m) at (4.23,6.78);
	\coordinate (p1) at (-3.75,7.06);
	\coordinate (p1m) at (3.75,7.06);
	\coordinate (p2) at (-3.25,7.30);
	\coordinate (p2m) at (3.25,7.30);
	\coordinate (p3) at (-2.73,7.51);
	\coordinate (p3m) at (2.73,7.51);
	\coordinate (p4) at (-2.20,7.69);
	\coordinate (p4m) at (2.20,7.69);        
	\coordinate (p5) at (-1.66,7.82);
	\coordinate (p5m) at (1.66,7.82);        
	\coordinate (p6) at (-1.11,7.92);
	\coordinate (p6m) at (1.11,7.92);        
	\coordinate (p7) at (-0.55,7.98);
	\coordinate (p7m) at (0.55,7.98);
	
	\draw [dashed] (O)--(0,10);
	\draw  (O)--(x);
	\draw[thick,red,-<-] (O)--(x);
	\draw[dashed] (O)--(-delta);
	\draw[dashed] (O)--(+delta);	
	
	\draw [thick,domain=-32:32] plot ( {8*sin(\x)},{8*cos(\x)});
	\draw [dashed,domain=-48:44] plot ( {8*sin(\x)},{8*cos(\x)});  
	
	\fill (O) circle[radius=1.5pt];
	\fill (x) circle[radius=1.5pt];
	\fill (p0) circle[radius=1.5pt];
	\fill (p0m) circle[radius=1.5pt];    	
	\fill (p1) circle[radius=1.5pt];
	\fill (p1m) circle[radius=1.5pt];
	\fill (p2) circle[radius=1.5pt];
	\fill (p2m) circle[radius=1.5pt];    	
	\fill (p3) circle[radius=1.5pt];
	\fill (p3m) circle[radius=1.5pt];
	\fill (p4) circle[radius=1.5pt];
	\fill (p4m) circle[radius=1.5pt];    	
	\fill (p5) circle[radius=1.5pt];
	\fill (p5m) circle[radius=1.5pt];    	
	\fill (p6) circle[radius=1.5pt];
	\fill (p6m) circle[radius=1.5pt];    	
	\fill (p7) circle[radius=1.5pt];
	\fill (p7m) circle[radius=1.5pt];
	
	\node[right] at (0,-0.3) {$0$};
	\node[right] at (0,9.1) {$\vt^*$};   	
	\node[right] at (-4.9,8.1) {$\vt^*-\delta$};    	
	\node[right] at (5.1,8.1) {$\vt^*+\delta$};
	\node at (3,4) {$\mathcal{C}$};	
	\node[blue] at (-2.2,4.2) {$\mathcal{W}_{loc}^s$};
	\node[right,red] at (0.1,8.3) {$\vt^*$-\emph{hom}}; 
	
	\draw[thick,blue,-<-] (O) parabola bend (p0) (p0); 
	\draw[thick,blue,-<-] (O) parabola bend (p1) (p1);
	\draw[thick,blue,-<-] (O) parabola bend (p2) (p2);
	\draw[thick,blue,-<-] (O) parabola bend (p3) (p3);
	\draw[thick,blue,-<-] (O) parabola bend (p4) (p4);
	\draw[thick,blue,-<-] (O) parabola bend (p5) (p5);
	\draw[thick,blue,-<-] (O) parabola bend (p6) (p6);
	\draw[thick,blue,-<-] (O) parabola bend (p7) (p7);
	\draw[thick,blue,-<-] (O) parabola bend (p0m) (p0m); 
	\draw[thick,blue,-<-] (O) parabola bend (p1m) (p1m);
	\draw[thick,blue,-<-] (O) parabola bend (p2m) (p2m);
	\draw[thick,blue,-<-] (O) parabola bend (p3m) (p3m);
	\draw[thick,blue,-<-] (O) parabola bend (p4m) (p4m);
	\draw[thick,blue,-<-] (O) parabola bend (p5m) (p5m);
	\draw[thick,blue,-<-] (O) parabola bend (p6m) (p6m);
	\draw[thick,blue,-<-] (O) parabola bend (p7m) (p7m);
	
	\end{tikzpicture}
	\caption{The local stable manifold characterized in Remark \ref{rem:top_conj}.}
\end{figure}
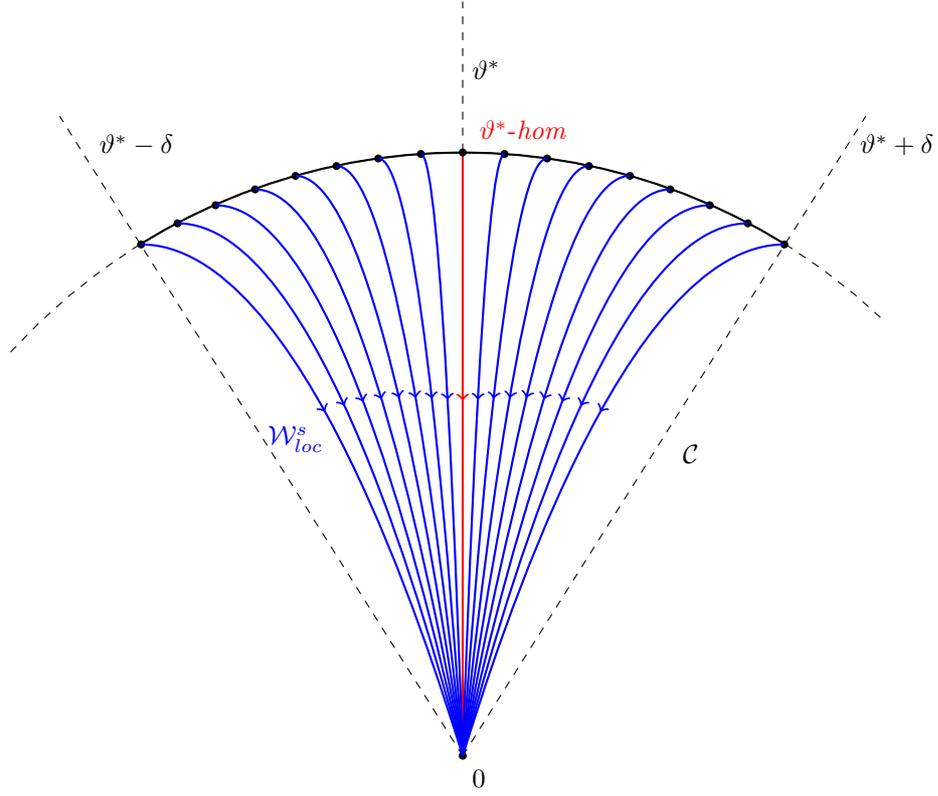

\section{Collision orbits as Bolza minimizers}\label{sec:var}

The first task of this work is to highlight the relationship which stands between the dynamical nature of this problem and our variational approach. Therefore, we present here the minimality argument which leads to the existence of a solution for \eqref{eq:an_kep}-\eqref{eq:energy} and provide further properties of this underlying variational structure of the problem. The \emph{Maupertuis' Principle} states that every critical point of a suitable functional, which could be either the Lagrange-action, the Jacobi-length or the Maupertuis' functional, if properly manipulated is a classical solution of \eqref{eq:an_kep}-\eqref{eq:energy} (see \cite{ArBook},\cite{AC-Z}). In the first part of this section, we state and prove a similar result which guarantees the existence of a trajectory which reaches the origin in finite time, once a critical point is provided. From now on, we will always consider $h<0$ fixed and assume \eqref{hyp:vt_star}-\eqref{hyp:V1}, unless differently specified.

\subsection{The Maupertuis' Principle for collision trajectories}

Given $q\in\mathcal{R}_h\setminus\{0\}$, consider the space of all the collision $H^1$-paths starting from $q$ and reaching the origin in finite time $T>0$
\[
\widehat{H}_{coll}^q=\{u\in H^1([0,T];\R^2):\ u(0)=q,\ u(T)=0\}.
\] 
Moreover, let us introduce the Maupertuis' functional $\mathcal{M}_h\colon \widehat{H}_{coll}^q\to\R\cup\{+\infty\}$ such that
\[
\mathcal{M}_h(u)=\frac12\int_0^T|\dot{u}(s)|^2\,ds\int_0^T(h+V(u(s)))\,ds
\]
which is differentiable over $\widehat{H}_{coll}^q$ and, if $\mathcal{M}_h(u)>0$, it makes sense to define the quantity
\begin{equation}\label{def:omega}
\omega=\left(\dfrac{\int_0^T(h+V(u))}{\frac12\int_0^T|\dot{u}|^2}\right)^{\frac12}>0.
\end{equation}

\begin{lemma}\label{lem:archetto}
	Let $u\in\widehat{H}_{coll}^q$ be a minimizer of $\mathcal{M}_h$, with $\mathcal{M}_h(u)>0$. Then, $u(t)\neq 0$ for every $t\in(0,T)$.
\end{lemma}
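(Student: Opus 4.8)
The plan is to argue by contradiction, removing the superfluous collision by a \emph{cut-and-rescale} comparison. The structural feature I will exploit is that $\mathcal{M}_h$ is invariant under affine reparametrizations of time: if $w(s)=u(\lambda s)$, the change of variables $t=\lambda s$ turns $\frac12\int|\dot w|^2$ into $\lambda\cdot\frac12\int|\dot u|^2$ and $\int(h+V(w))$ into $\lambda^{-1}\int(h+V(u))$, leaving the product $\mathcal{M}_h$ unchanged. Consequently, any truncation of $u$ that still joins $q$ to the origin may be rescaled back onto $[0,T]$ to produce an admissible element of $\widehat{H}_{coll}^q$ whose Maupertuis value equals that of the truncated arc.

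Suppose then that $u(t_0)=0$ for some $t_0\in(0,T)$, and abbreviate
\[
A_1=\int_0^{t_0}|\dot u|^2,\quad A_2=\int_{t_0}^T|\dot u|^2,\quad B_1=\int_0^{t_0}(h+V(u)),\quad B_2=\int_{t_0}^T(h+V(u)),
\]
so that $2\mathcal{M}_h(u)=(A_1+A_2)(B_1+B_2)$. The restriction $u|_{[0,t_0]}$ joins $q$ to $0$; rescaling its domain to $[0,T]$ yields a competitor $v\in\widehat{H}_{coll}^q$ with $2\mathcal{M}_h(v)=A_1B_1$. Since
\[
(A_1+A_2)(B_1+B_2)-A_1B_1=A_1B_2+A_2B_1+A_2B_2,
\]
a contradiction with the minimality of $u$ will follow once I show that this sum of cross terms is strictly positive.

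Two observations settle the signs. First, the discarded arc $u|_{[t_0,T]}$ is nonconstant: if $u\equiv0$ on $[t_0,T]$ then $V(u)\equiv+\infty$ there and $\mathcal{M}_h(u)=+\infty$, incompatible with $u$ being a minimizer (for $\al\in(0,2)$ the infimum is finite), whence $A_2>0$. Second, $h+V(u)\ge0$ almost everywhere: on each open interval where $u\neq0$ the minimizer is, after the time-rescaling by $\omega$ from \eqref{def:omega}, a classical solution of \eqref{eq:an_kep} of energy $h$, so that $h+V(u)$ equals $\frac12$ of the squared velocity and is nonnegative there, while the collision set is closed and carries $h+V=+\infty$. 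Because $V(u)\to+\infty$ as $s\to t_0^-$, the integrand of $B_1$ is strictly positive near $t_0$, giving $B_1>0$, and likewise $B_2\ge0$. Therefore $A_2B_1>0$ and $A_1B_2,\,A_2B_2\ge0$, whence $A_1B_2+A_2B_1+A_2B_2>0$ and $\mathcal{M}_h(v)<\mathcal{M}_h(u)$, contradicting minimality.

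I expect the delicate point to be the positivity bookkeeping of the potential integrals, namely the claim $h+V(u)\ge0$ a.e.\ (equivalently, that the minimizer never strays outside the Hill region $\mathcal{R}_h$). The argument above routes this through the energy relation on collision-free subintervals; should one prefer not to invoke the Euler--Lagrange analysis before the interior-collision issue is settled, the same conclusion can be obtained directly by projecting any excursion of $u$ outside $\overline{\mathcal{R}_h}$ onto its boundary and verifying that this does not increase $\mathcal{M}_h$. Either way, once $B_1>0$ and $B_2\ge0$ are secured the product expansion closes the proof at once.
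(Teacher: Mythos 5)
Your strategy coincides with the paper's own proof: assume an interior collision at $t_0$, rescale $u|_{[0,t_0]}$ back onto $[0,T]$ using the affine invariance of $\mathcal{M}_h$, expand $2\mathcal{M}_h(u)=(A_1+A_2)(B_1+B_2)$, and contradict minimality once the cross terms $A_1B_2+A_2B_1+A_2B_2$ are shown to be strictly positive. Your algebra and your proof that $A_2>0$ are correct, and you have rightly singled out the signs of $B_1,B_2$ as the crux: the paper's proof disposes of this point by simply writing ``[positive terms]'' with no justification, and in the unconstrained class $\widehat{H}_{coll}^q$ it is genuinely not automatic, since admissible paths are free to leave the Hill region $\mathcal{R}_h$, where $h+V<0$.

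However, your justification of those signs has a real gap. Compactly supported variations do show that $u$ solves $\omega^2\ddot u=\nabla V(u)$ classically on every open collision-free subinterval, hence carries a conserved energy on each such subinterval; but this constant is only locally defined and may jump across an interior collision. Its identification with $h$ --- which is exactly what you use when you assert that $h+V(u)$ equals half the squared velocity --- is obtained in Theorem \ref{thm:maupertuis} by integrating the energy identity over all of $[0,T)$, and this presupposes that the Euler--Lagrange equation holds on the \emph{whole} interval, i.e.\ that there are no interior collisions: precisely the statement under proof. A priori some subinterval could carry a constant $k>h$, and there $h+V(u)=\tfrac{\omega^2}{2}|\dot u|^2+(h-k)$ has no definite sign. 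Your fallback (projecting excursions outside $\overline{\mathcal{R}_h}$ onto the boundary) does not close the gap either: the projection \emph{increases} the factor $\int_0^T(h+V(u))$ (negative integrand is replaced by zero) while its effect on $\int_0^T|\dot u|^2$ is uncontrolled, since $\mathcal{R}_h$ need not be convex and the projection need not be $1$-Lipschitz, so the product can a priori move either way. Two honest repairs exist: (i) observe that the lemma is only ever applied to minimizers in the confined class $H_{coll}^q$ with $q\in B_{r_{LJ}}$, where $|u|\le r<r_{LJ}$ gives $h+V(u)\ge-\al h/(2-\al)>0$ pointwise as in Remark \ref{rem:hill}, so that $B_1,B_2>0$ is immediate; or (ii) remain in $\widehat{H}_{coll}^q$ and use one more first variation, reallocating time between the two pieces: comparing $u$ with the competitor that traverses $u|_{[0,t_0]}$ on $[0,\mu T]$ and $u|_{[t_0,T]}$ on $[\mu T,T]$, stationarity at $\mu=t_0/T$ yields $A_1B_2=A_2B_1$, which together with $B_1+B_2=\int_0^T(h+V(u))>0$ (a consequence of $\mathcal{M}_h(u)>0$) forces $B_1>0$ and $B_2>0$; your product expansion then finishes the proof.
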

\begin{proof}
	Assume by contradiction that there exists $\tau\in(0,T)$ such that $u(\tau)=0$. Observe that the path $v(t)=u(t\cdot\tau/T)$ defined for $t\in[0,T]$ belongs to $\widehat{H}_{coll}^q$. Since the Maupertuis' functional is invariant through time rescalings, with a standard change of variable we obtain
	\[
	\begin{aligned}
	\mathcal{M}_h(u)&=\left(\frac12\int_0^\tau|\dot{u}|^2+\frac12\int_0^\tau|\dot{u}|^2\right)\left(\int_\tau^T(h+V(u))+\int_\tau^T(h+V(u))\right) \\
	&=\frac12\int_0^\tau|\dot{u}|^2\int_0^\tau(h+V(u))+[\mbox{positive terms}] \\
	&=\mathcal{M}_h(v)+[\mbox{positive terms}],
	\end{aligned}
	\]
	which is a contradiction for the minimality of $u$.
\end{proof}

\begin{theorem}\label{thm:maupertuis}
	Let $u\in \widehat{H}_{coll}^q$ be  a minimizer for $\mathcal{M}_h$ such that $\mathcal{M}_h(u)>0$. Then, for $\omega$ given by \eqref{def:omega}, $x(t)=u(\omega t)$ is a classical solution of \eqref{eq:an_kep}-\eqref{eq:energy} in $[0,T/\omega)$ such that
	\begin{itemize}
		 \item $x(0)=q$, $x(T/\omega)=0$;
	     \item $x(t)/|x(t)|\to\vt^*$ as $t\to (T/\omega)^-$, with $\vt^*\in\cerchio$ central configuration for $V$;
	     \item for some positive constant $K$, we have $|x(t)|\sim K(T/\omega-t)^{2/(2+\al)}$ as $t\to(T/\omega)^-$.
	\end{itemize}
\end{theorem}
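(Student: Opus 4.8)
The plan is to separate the statement into its dynamical core --- that $x$ is a classical fixed-energy solution with the prescribed endpoints --- and its asymptotic part, the behaviour as $t\to(T/\omega)^-$.

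\emph{The Maupertuis principle.} Since $\mathcal{M}_h(u)>0$, Lemma \ref{lem:archetto} guarantees $u(t)\neq0$ on $(0,T)$, so along $u$ the potential $V$ is smooth on every compact subinterval of $(0,T)$ and $\mathcal{M}_h$ is differentiable there. Writing $A=\tfrac12\int_0^T|\dot u|^2$ and $B=\int_0^T(h+V(u))$, so that $\mathcal{M}_h(u)=AB$ and $\omega^2=B/A$ by \eqref{def:omega}, I would compute the first variation of $\mathcal{M}_h$ along admissible $\eta$ vanishing at the endpoints. The product rule gives
\[
\langle\mathcal{M}_h'(u),\eta\rangle=B\int_0^T\dot u\cdot\dot\eta+A\int_0^T\nabla V(u)\cdot\eta,
\]
and integrating by parts, the vanishing of this expression for every $\eta$ yields the weak Euler--Lagrange equation $-B\ddot u+A\nabla V(u)=0$, i.e. $\ddot u=\omega^{-2}\nabla V(u)$. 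A standard bootstrap (the right-hand side is smooth away from the origin, which $u$ avoids on $(0,T)$) upgrades $u$ to a $\mathcal{C}^2$ solution there, and setting $x(t)=u(\omega t)$ turns this into $\ddot x=\nabla V(x)$ on $(0,T/\omega)$; the endpoints $x(0)=u(0)=q$, $x(T/\omega)=u(T)=0$ are immediate. For the energy level I would use that $E=\tfrac12|\dot x|^2-V(x)$ is a constant of motion, and check $E=h$ by integrating $E$ over $(0,T/\omega)$ and changing variables $s=\omega t$: one finds $E\,(T/\omega)=\omega^{-1}(\omega^2A-B+hT)$, and since $\omega^2A=B$ the first two terms cancel, leaving $E=h$. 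This is precisely where the scaling factor $\omega$ from \eqref{def:omega} is needed.

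\emph{Asymptotics at collision.} This is the delicate part. As $t\to(T/\omega)^-$ we have $x(t)\to0$, and by the known asymptotic analysis of collision trajectories for anisotropic homogeneous potentials (see \cite{BFT2,FT2003,BTV}) the normalized configuration $x(t)/|x(t)|$ converges to a central configuration, i.e. to a critical point of $U$. To identify this limit with the global minimizer $\vt^*$ of \eqref{hyp:vt_star} I would invoke the minimality of $u$: the leading-order contribution of a collision arc to $\mathcal{M}_h$ is governed by the value of $U$ along the limiting direction, so a minimizer cannot accumulate at a non-minimal critical point, forcing convergence to $\vt^*$. For the radial rate, I would exploit the $-\al$-homogeneity together with the energy relation and the generalised Sundman monotonicity formula: writing $\rho(t)=|x(t)|$, the leading balance in the radial equation is $\ddot\rho\sim-\al\,\rho^{-\al-1}U(\vt^*)$, and matching the ansatz $\rho(t)\sim K(T/\omega-t)^\beta$ forces $\beta(2+\al)=2$, hence $\beta=2/(2+\al)$; the Sundman formula supplies the monotonicity needed to promote this formal balance to the genuine asymptotic equivalence $|x(t)|\sim K(T/\omega-t)^{2/(2+\al)}$.

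\emph{Main obstacle.} The dynamical core (first variation, rescaling, energy identity) is routine once Lemma \ref{lem:archetto} is in hand; the real work is the asymptotic part, and within it the two genuinely non-trivial points are pinning the limiting central configuration to the global minimizer $\vt^*$ by minimality, and extracting the sharp exponent $2/(2+\al)$, for which the homogeneity of $S$ and the Sundman monotonicity formula are the essential tools.
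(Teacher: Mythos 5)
The first half of your argument --- the first variation of $\mathcal{M}_h$, the weak equation $\omega^2\ddot u=\nabla V(u)$, regularity away from the origin via Lemma \ref{lem:archetto}, the rescaling $x(t)=u(\omega t)$, and the identification of the conserved energy with $h$ by integrating over the interval and using $\omega^2 A=B$ --- is exactly the paper's proof (the paper integrates the conserved quantity $\tfrac{\omega^2}{2}|\dot u|^2-V(u)$ of $u$ over $[0,T)$; your change of variables $s=\omega t$ is the same computation). For the two asymptotic bullets the paper gives no new argument at all: it quotes them as well-known estimates for collision solutions of $-\al$-homogeneous potentials (\cite{BFT2,BTV,BTVplanar}), and that is all the statement requires.

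The genuine problem is your treatment of the second bullet. You read ``$x(t)/|x(t)|\to\vt^*$, with $\vt^*\in\cerchio$ central configuration for $V$'' as convergence to the global minimizer of \eqref{hyp:vt_star}, and propose to force this by minimality. That is a misreading: in this theorem the symbol $\vt^*$ merely names the limiting configuration, and the assertion is only that the limit exists and is a critical point of $U$ (exactly what the cited estimates provide; compare \eqref{eq:limit} in the Introduction). More importantly, the argument you propose --- ``the leading-order contribution of a collision arc to $\mathcal{M}_h$ is governed by the value of $U$ along the limiting direction, so a minimizer cannot accumulate at a non-minimal critical point'' --- would fail. The portion of the Jacobi length (equivalently, of $\mathcal{M}_h$) accumulated inside $B_\ve$ scales like $\ve^{1-\al/2}\to 0$, and the angular cost of detouring from a non-minimal critical direction towards $\vt^*$ scales in exactly the same way; the comparison is therefore scale-invariant and depends on the global geometry of $U$ and on where $q$ sits, not on minimality alone. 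A minimizer starting deep in the basin of a non-global (nondegenerate) local minimum of $U$ will in general collide along that direction. Indeed, identifying the limiting direction of minimizers with the global minimizer $\vt^*$ is precisely the content of the paper's Main Theorem (Theorem \ref{thm:main}): it is proved only for starting points in a small cone around the $\vt^*$-ray, and its proof needs the stable-manifold description of Lemma \ref{lem:stable_manif}, the compactness Lemma \ref{lem:min_comp}, uniqueness of the homothetic minimizer (Lemma \ref{lem:un_hom}), and the Sundman-function monotonicity on the collision manifold. If your one-paragraph minimality argument were valid, that theorem would be global and essentially trivial. Your formal matching for the exponent $2/(2+\al)$ is consistent, but again the paper treats the rate as a quotable known estimate; once the second bullet is read correctly, nothing beyond the citations is needed, and your proof reduces to the paper's.
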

\begin{proof}
	Since $\mathcal{M}'_h(u)=0$ we have
	\[
	\mathcal{M}'_h(u)[v]=\int_0^T\dot{u}\cdot\dot{v}\int_0^T(h+V(u))+\frac12\int_0^T|\dot{u}|^2\int_0^T\nabla V(u)\cdot v=0,
	\]
	for every $v\in H_0^1([0,T];\R^2)$ and so, since $\mathcal{M}_h(u)>0$
	\[
	\omega^2\int_0^T\dot{u}\cdot\dot{v}+\int_0^T\nabla V(u)\cdot v=0,
	\]
	for every $v\in H_0^1([0,T];\R^2)$. In other words, $u$ is a weak solution of the equation
	\begin{equation}\label{eq:omega_eq}
	\omega^2 \ddot{u}=\nabla V(u),
	\end{equation}
	but also, by standard regularity arguments and by Lemma \ref{lem:archetto}, a classical solution of the same equation in $[0,T)$. Now, it is readily checked that $x(t)=u(\omega t)$ solves \eqref{eq:an_kep}-\eqref{eq:energy} in $[0,T/\omega)$ and that the required boundary conditions are satisfied. The limiting behaviours as $t\to (T/\omega)^-$ follow from the well-known asymptotic estimates (see \cite{BFT2,BTV,BTVplanar}). Moreover, from equation \eqref{eq:omega_eq}, we deduce that there exists $k\in\R$ such that
	\[
	\frac{\omega^2}{2}|\dot{u}(t)|^2-V(u(t))=k,
	\]
	for every $t\in[0,T)$. Integrating the above equation over $[0,T)$, we necessarily get $k=h$ and the energy equation \eqref{eq:energy} for $x$ holds as well.
\end{proof}

\subsection{Existence through direct methods}

In what follows, we show the existence of minimizers for the Maupertuis' functional, which correspond to collision trajectories through Theorem \ref{thm:maupertuis}. However, it will be clear that such motions cannot start too much far from the singularity. The initial distance $r=|q|$ of the particle is indeed linked to the well-known \emph{Lagrange-Jacobi} inequality (see for instance \cite{WintBook}), which we prove below in our setting.

\begin{lemma}[Lagrange-Jacobi inequality]\label{lem:lj}
	Define $U_{min}=\min\limits_{\vt\in\cerchio}U(\vt)$ and 
	\[
	r_{LJ}=\left[\frac{(2-\al)U_{min}}{-2h}\right]^{\frac{1}{\al}}>0.
	\]
	For every solution $x$ of \eqref{eq:an_kep}-\eqref{eq:energy} such that $|x|<r_{LJ}$, we have that the moment of inertia $I(x(t))=\frac12|x(t)|^2$ is strictly convex with respect to $t$. In particular, for a solution $x(t)=r(t)e^{i\vt(t)}$ which collides with the origin after a time $T>0$, we have $r'(t)<0$ in $(0,T)$.
\end{lemma}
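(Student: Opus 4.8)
The plan is to establish the classical Lagrange--Jacobi identity for the moment of inertia and then read off the sign of $\ddot I$ directly from the homogeneity of $V$ together with the energy relation. Writing $I(t)=\tfrac12|x(t)|^2$ and differentiating twice along a solution of \eqref{eq:an_kep} gives $\dot I=x\cdot\dot x$ and $\ddot I=|\dot x|^2+x\cdot\ddot x=|\dot x|^2+x\cdot\nabla V(x)$. Since in the present setting $V$ is exactly $-\al$-homogeneous (recall \eqref{hyp:V1} with $W\equiv0$), Euler's identity yields $x\cdot\nabla V(x)=-\al V(x)$, while the energy relation \eqref{eq:energy} gives $|\dot x|^2=2h+2V(x)$. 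Substituting both, the mixed term collapses and I obtain the compact formula
\[
\ddot I=2h+(2-\al)V(x).
\]

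Next I would bound $V$ from below in order to pin down the threshold $r_{LJ}$. Writing $x=re^{i\vt}$ one has $V(x)=r^{-\al}U(\vt)\geq r^{-\al}U_{min}$, hence $\ddot I\geq 2h+(2-\al)U_{min}\,r^{-\al}$ since $2-\al>0$. The right-hand side is strictly positive exactly when $(2-\al)U_{min}\,r^{-\al}>-2h$, that is $r^\al<(2-\al)U_{min}/(-2h)$, which is precisely $r<r_{LJ}$. Thus, under the standing hypothesis $|x|<r_{LJ}$, we get $\ddot I>0$ throughout the interval of existence, so $I$ is strictly convex in $t$. This is the whole computational core and it is elementary; the only point to state with care is the direction of the inequalities, since dividing by $h<0$ reverses them.

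For the last assertion I would combine the strict convexity just proved with the boundary data of a collision arc, avoiding any use of the precise collision asymptotics. As $I\geq0$ everywhere and $I(T)=\tfrac12|x(T)|^2=0$, the value $0$ is the minimum of $I$ on $(0,T]$, attained at $T$. Strict convexity forces $\dot I$ to be strictly increasing, so if one had $\dot I(t_0)\geq0$ for some $t_0\in(0,T)$, then $I$ would be strictly increasing on $[t_0,T]$ and $I(T)>I(t_0)\geq0$, contradicting $I(T)=0$. Hence $\dot I(t)<0$ for every $t\in(0,T)$. Finally, $\dot I=r\,r'$ (with $r'=dr/dt$) and $r(t)>0$ on $(0,T)$, the particle reaching the origin only at $T$, so $\dot I<0$ is equivalent to $r'(t)<0$, which is the claim. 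The main obstacle is essentially bookkeeping: checking that the homogeneity and energy substitutions reproduce exactly the stated threshold, and arranging the convexity argument at the right endpoint so that the collision asymptotics are not needed.
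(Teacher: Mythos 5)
Your proof is correct and follows essentially the same route as the paper: both derive the Lagrange--Jacobi identity $\ddot I=(2-\al)V(x)+2h$ from Euler's identity for the $-\al$-homogeneous potential together with the energy relation, and then conclude $\ddot I>0$ from the bound $V(x)\geq |x|^{-\al}U_{min}$ and the definition of $r_{LJ}$. The only difference is that you also spell out the elementary argument (strict convexity of $I$ plus $I\geq 0$, $I(T)=0$) for the final claim $r'(t)<0$ on $(0,T)$, which the paper asserts without proof; that addition is correct.
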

\begin{proof}
	By standard calculations and using \eqref{eq:an_kep}-\eqref{eq:energy} we obtain
	\[
	\begin{aligned}
	\frac{d^2}{dt^2}I(x(t))&=\langle\nabla V(x(t)),x(t)\rangle+2(V(x(t))+h) \\
	&=|x(t)|^{-\al}(2-\al)U(\vt(t))+2h \\
	&>(2-\al)r_{LJ}^{-\al}U_{min}+2h=0.
	\end{aligned}
	\]
\end{proof}

The previous result suggests to consider a smaller minimization set than $\widehat{H}_{coll}^q$ and to require the \emph{natural constraint} for a path to do not leave the ball where it started from. To be precise, given $r>0$ and $q\in\partial B_r$, we introduce the set of all the $H^1$-paths which start in $q$ and collapse in the origin in finite time, without leaving the ball $B_r$
\[
H_{coll}^q\uguale\{u\in H^1([0,1];\R^2):\ u(0)=q,\ u(1)=0,\ |u(s)|\leq r\ \mbox{for every}\ s\in[0,1]\}.
\]
Here and later, in order to simplify the notation, we have set $T=1$.

We now present two lemmata which allow us to apply the direct method of the calculus of variations; to this aim, we will often make use of the Poincar\'e inequality, which clearly holds in the space $H_{coll}^q$.

\begin{lemma}\label{lem:bounds}
	For every $q\in B_{r_{LJ}}$, there exists a positive constant $C$ such that
	\[
	0<C\leq\inf\limits_{u\in H_{coll}^q}\mathcal{M}_h(u)<+\infty.
	\]
\end{lemma}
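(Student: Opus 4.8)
The plan is to control separately the two factors of the product defining $\mathcal{M}_h$, since the kinetic integral and the integral of $h+V(u)$ admit one-sided bounds coming, respectively, from the boundary conditions and from the standing assumption $q\in B_{r_{LJ}}$. Throughout write $r=|q|\in(0,r_{LJ})$.

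\emph{Lower bound.} First I would bound the kinetic factor from below using only the endpoint conditions: since $\int_0^1\dot u\,ds=u(1)-u(0)=-q$, the Cauchy--Schwarz (Poincar\'e) inequality gives $\int_0^1|\dot u|^2\,ds\geq\big|\int_0^1\dot u\,ds\big|^2=r^2$, so that $\tfrac12\int_0^1|\dot u|^2\geq r^2/2>0$ for every $u\in H_{coll}^q$. For the potential factor I would use the constraint $|u(s)|\leq r$ together with $U\geq U_{min}$ to get the pointwise estimate $V(u(s))=|u(s)|^{-\al}U(\vt(s))\geq r^{-\al}U_{min}$ for a.e. $s\in[0,1]$. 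Here the role of the Lagrange--Jacobi threshold becomes apparent: since $r<r_{LJ}$, the definition of $r_{LJ}$ yields $r^{-\al}U_{min}>r_{LJ}^{-\al}U_{min}=\tfrac{-2h}{2-\al}>-h$, whence $h+V(u(s))\geq h+r^{-\al}U_{min}>0$ uniformly in $s$. Integrating over $[0,1]$ and multiplying the two positive lower bounds gives
\[
\mathcal{M}_h(u)\geq\frac{r^2}{2}\left(h+r^{-\al}U_{min}\right)=:C>0
\]
for every $u\in H_{coll}^q$, which is the desired lower bound.

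\emph{Finiteness.} For the upper bound it suffices to exhibit one admissible path of finite energy, and this is where some care is needed. The naive constant-speed segment $u(s)=(1-s)q$ has finite kinetic integral but makes $\int_0^1 V(u)$ diverge as soon as $\al\geq1$, because $\int_0^1(1-s)^{-\al}\,ds=+\infty$ in that range. I would instead use a reparametrized radial competitor $u(s)=r(1-s)^{\beta}\,q/|q|$ with the exponent chosen in the window $\beta\in(\tfrac12,\tfrac1\al)$, which is nonempty precisely because $\al\in(0,2)$ (the collision rate $\beta=2/(2+\al)$ of Theorem \ref{thm:maupertuis} is an admissible choice). Such a $u$ clearly lies in $H_{coll}^q$ (endpoints correct and $|u(s)|=r(1-s)^\beta\leq r$), and a direct computation shows $\int_0^1|\dot u|^2\,ds=\beta^2r^2\int_0^1(1-s)^{2\beta-2}\,ds<\infty$ since $2\beta-2>-1$, while $\int_0^1(h+V(u))\,ds=h+U(\vt_q)\,r^{-\al}\int_0^1(1-s)^{-\al\beta}\,ds<\infty$ since $\al\beta<1$. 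Both factors being finite, $\inf_{H_{coll}^q}\mathcal{M}_h\leq\mathcal{M}_h(u)<+\infty$.

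\emph{Main obstacle.} The only genuinely delicate point is the construction of this finite-energy competitor: the kinetic and potential integrals impose the \emph{opposite} requirements $\beta>\tfrac12$ and $\beta<\tfrac1\al$ on the radial collapse rate, and it is exactly the hypothesis $\al<2$ that reconciles them. The lower bound, by contrast, is a direct consequence of the endpoint conditions and of the Lagrange--Jacobi smallness assumption $r<r_{LJ}$, both factors being bounded away from zero uniformly over $H_{coll}^q$.
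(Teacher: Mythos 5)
Your proof is correct. The lower bound is exactly the paper's argument: the endpoint condition plus Cauchy--Schwarz gives $\int_0^1|\dot u|^2\,ds\geq r^2$, and the constraint $|u(s)|\leq r<r_{LJ}$ together with the definition of $r_{LJ}$ in Lemma \ref{lem:lj} bounds $\int_0^1(h+V(u))\,ds$ below by the positive constant $-\al h/(2-\al)$, so the two steps and the resulting constant coincide with the paper's (up to the factor $\tfrac12$, which the paper itself uses inconsistently in the definition of $\mathcal{M}_h$). Where you genuinely diverge is the finiteness of the infimum. The paper dispatches it in one sentence, asserting that for $u\in H_{coll}^q$ one has $\dot u\in L^2$ and $V(u)\in L^1$ ``by means of the limiting behaviour provided in Theorem \ref{thm:maupertuis}''; read literally this is not true for an arbitrary element of $H_{coll}^q$ (your own example, the segment $(1-s)q$ with $\al\geq 1$, is a counterexample), and what is really meant is that a path collapsing at the natural rate $(1-s)^{2/(2+\al)}$ has finite value. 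Your proposal makes that competitor explicit: the radial path $u(s)=r(1-s)^{\beta}q/|q|$ with $\beta\in(\tfrac12,\tfrac1\al)$, where the kinetic integral needs $2\beta-2>-1$ and the potential integral needs $\al\beta<1$, two requirements that are simultaneously satisfiable precisely because $\al\in(0,2)$. This is a self-contained and more careful treatment of the upper bound than the paper's, and it correctly isolates where the hypothesis $\al<2$ enters; the rest of your argument buys nothing new but loses nothing either.
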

\begin{proof}
	Fix $r\in(0,r_{LJ})$ and $q\in\partial B_r$. From the definition of $r_{LJ}$ given in Lemma \ref{lem:lj}, we have that, for every $u\in H_{coll}^q$
	\begin{equation}\label{eq:bounds_step}
	\int_0^1(h+V(u))\,ds\geq\int_0^1(h+r^{-\al}U_{min})\,ds> h+r_{LJ}^{-\al}U_{min}=-\frac{\al h}{2-\al}>0.
	\end{equation}
	Moreover, for $u\in H_{coll}^q$, we can write
	\[
	r=|u(1)-u(0)|\leq\int_0^1|\dot{u}|\,ds\leq\left(\int_0^1|\dot{u}|^2\,ds\right)^{1/2}
	\]
	and so, together with \eqref{eq:bounds_step}, we obtain
	\[
	\mathcal{M}_h(u)=\int_0^1|\dot{u}|^2\,ds\int_0^1(h+V(u))\,ds\geq-\frac{\al hr^2}{2-\al}=C>0,\quad\mbox{for every}\ u\in H_{coll}^q.
	\]	
	Moreover, since $u\in H_{coll}^q$, then $\dot{u}\in L^2([0,1];\R^2)$ and $V(u)\in L^1([0,1];\R^2)$, by means of the limiting behaviour provided in Theorem \ref{thm:maupertuis}. This proves the upper bound and concludes the proof.
\end{proof}

\begin{lemma}\label{lem:coercivity}
	For every $q\in B_{r_{LJ}}$, $\mathcal{M}_h$ is coercive on $H_{coll}^q$.
\end{lemma}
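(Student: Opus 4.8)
The plan is to exploit the product structure of $\mathcal{M}_h$: the first factor $\frac12\int_0^1|\dot u|^2$ is exactly half the squared $L^2$-norm of the velocity, which blows up as $\|\dot u\|_{L^2}\to+\infty$, while the second factor $\int_0^1(h+V(u))$ can be bounded from below by a \emph{strictly positive} constant, uniformly over $H_{coll}^q$. Once these two facts are combined, coercivity follows at once.

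First I would recall that every $u\in H_{coll}^q$ satisfies the pointwise constraint $|u(s)|\leq r=|q|<r_{LJ}$, so that, since $V(x)=|x|^{-\al}U(\vt)$ with $U\geq U_{min}>0$ and $-\al<0$, one has $V(u(s))\geq r^{-\al}U_{min}$ for a.e. $s$. Integrating and using $r<r_{LJ}$ together with the explicit value of $r_{LJ}$ from Lemma \ref{lem:lj} yields the lower bound already obtained in \eqref{eq:bounds_step} of Lemma \ref{lem:bounds}, namely
\[
\int_0^1(h+V(u))\,ds\geq h+r^{-\al}U_{min}>h+r_{LJ}^{-\al}U_{min}=-\frac{\al h}{2-\al}=:c_0>0.
\]
This constant $c_0$ (or, more sharply, $h+r^{-\al}U_{min}$) depends only on $|q|$ and not on $u$.

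Next I would invoke the Poincar\'e inequality on $H_{coll}^q$: because the endpoint $u(1)=0$ is fixed, the full $H^1$-norm of $u$ is controlled by $\|\dot u\|_{L^2}$ alone, i.e.\ $\|u\|_{H^1}^2\leq C_P\|\dot u\|_{L^2}^2$ for some $C_P>0$ (one may simply use $|u(s)|=|\int_s^1\dot u|\leq\|\dot u\|_{L^2}$, giving $C_P=2$). Combining this with the lower bound on the second factor gives
\[
\mathcal{M}_h(u)=\frac12\|\dot u\|_{L^2}^2\int_0^1(h+V(u))\,ds\geq\frac{c_0}{2}\|\dot u\|_{L^2}^2\geq\frac{c_0}{2C_P}\|u\|_{H^1}^2,
\]
so that $\mathcal{M}_h(u)\to+\infty$ whenever $\|u\|_{H^1}\to+\infty$, which is precisely coercivity.

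There is no serious obstacle here: the only point requiring a little care is the applicability of Poincar\'e, which hinges on having one fixed, vanishing endpoint, as already remarked before the statement of Lemma \ref{lem:bounds}. The positivity of the second factor is not an estimate to be fought for but is inherited directly from the Lagrange-Jacobi threshold $r_{LJ}$, which is exactly the reason the minimization has to be carried out inside the ball $B_{r_{LJ}}$.
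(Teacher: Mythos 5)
Your proof is correct and takes essentially the same route as the paper: both arguments rest on the uniform positive lower bound \eqref{eq:bounds_step} for the factor $\int_0^1(h+V(u))\,ds$, combined with the observation that in $H_{coll}^q$ a divergent $H^1$-norm forces the kinetic factor $\int_0^1|\dot u|^2\,ds$ to diverge. The only cosmetic difference is the mechanism for that last step: the paper uses the pointwise constraint $|u(s)|\leq r$ (which bounds the $L^2$-part of the norm), while you use the Poincar\'e inequality coming from $u(1)=0$; both devices are explicitly part of the paper's toolbox.
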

\begin{proof}
	Fix $r\in(0,r_{LJ})$ and $q\in\partial B_r$ and consider $(u_n)_n\sset H_{coll}^q$, such that $\|u_n\|_{H^1}\to+\infty$ as $n\to+\infty$. Since $|u_n(s)|\leq r$ for every $s\in[0,1]$ and for every $n\in\N$, we obtain that necessarily
	\[
	\lim\limits_{n\to+\infty}\int_0^1|\dot{u}_n|^2\,ds=+\infty
	\]
	and so, together with \eqref{eq:bounds_step}, we conclude that $\mathcal{M}_h(u_n)\to+\infty$.
\end{proof}

We are about to prove that a minimizer of  $\mathcal{M}_h$ exists and thus, invoking Theorem \ref{thm:maupertuis}, a collision trajectory $x(t)$ satisfying \eqref{eq:an_kep}-\eqref{eq:energy} can be provided.

\begin{theorem}\label{thm:coll_orb}
	Given $h<0$ and $r_{LJ}>0$ as in Lemma \ref{lem:lj}, the Maupertuis' functional
	\[   
	\mathcal{M}_h(u)=\int_0^1|\dot{u}|^2\,ds\int_0^1(h+V(u))\,ds
	\]   
	admits at least a minimizer $u\in H_{coll}^q$ at a positive level, for every $q\in B_{r_{LJ}}$. 
\end{theorem}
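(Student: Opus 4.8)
The plan is to run the direct method of the calculus of variations on $\mathcal{M}_h$ over $H_{coll}^q$, the only two genuine difficulties being that the integrand $h+V(u)$ is singular and that $\mathcal{M}_h$ is a \emph{product} of two functionals rather than a sum. First I would fix $r\in(0,r_{LJ})$ and $q\in\partial B_r$, and take a minimizing sequence $(u_n)_n\sset H_{coll}^q$ with $\mathcal{M}_h(u_n)\to m:=\inf_{H_{coll}^q}\mathcal{M}_h$, which is a \emph{positive} real number by Lemma \ref{lem:bounds}. Coercivity (Lemma \ref{lem:coercivity}) bounds $(u_n)_n$ in $H^1$, so up to a subsequence $u_n\wconv u$ weakly in $H^1([0,1];\R^2)$ and, by the compact embedding $H^1\hookrightarrow\hookrightarrow\mathcal{C}^0$, uniformly on $[0,1]$.

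Next I would verify that the limit $u$ is admissible: the boundary data $u(0)=q$, $u(1)=0$ and the pointwise constraint $|u(s)|\le r$ all pass to the uniform limit, so $u\in H_{coll}^q$ and hence $\mathcal{M}_h(u)\ge m$. It remains to establish the reverse inequality $\mathcal{M}_h(u)\le m$, which is exactly the lower semicontinuity statement.

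To handle the product structure I would isolate the two factors $A(v)=\int_0^1|\dot v|^2$ and $B(v)=\int_0^1(h+V(v))$, so that $\mathcal{M}_h=A\cdot B$. The a priori bound $r^2\le A(u_n)$ (coming from $r=|u_n(1)-u_n(0)|\le(\int_0^1|\dot u_n|^2)^{1/2}$), combined with $A(u_n)B(u_n)=\mathcal{M}_h(u_n)\to m$, forces $B(u_n)=\mathcal{M}_h(u_n)/A(u_n)$ to be bounded above, while \eqref{eq:bounds_step} bounds it below by a positive constant; hence, after extracting a further subsequence, $A(u_n)\to\alpha$ and $B(u_n)\to\beta$ with $\alpha\beta=m$ and $\alpha,\beta>0$. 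By weak lower semicontinuity of the Dirichlet integral, $A(u)\le\alpha$. For the potential factor I would invoke Fatou's lemma: since $V\ge0$ and $V(u_n(s))\to V(u(s))$ for a.e.\ $s$ --- using uniform convergence and continuity of $V$ away from the origin where $u(s)\neq0$, and the bound $V(x)\ge U_{min}|x|^{-\al}\to+\infty$ as $x\to0$ to cover the instants where $u(s)=0$ --- one gets $\int_0^1 V(u)\le\liminf\int_0^1 V(u_n)$, whence $B(u)\le\beta$. All four quantities being nonnegative, $\mathcal{M}_h(u)=A(u)B(u)\le\alpha\beta=m$, so $u$ is the sought minimizer, lying at the positive level $m$.

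The main obstacle is precisely the lower semicontinuity of the potential factor in the presence of the collision singularity: $B$ is not continuous along the sequence, and the multiplicative form of $\mathcal{M}_h$ blocks any one-line appeal to sequential weak lower semicontinuity. The device that makes the argument go through is the elementary lower bound $A(u_n)\ge r^2$, which confines both factors to compact positive intervals and thereby lets the two (individually nontrivial) semicontinuity facts --- weak lower semicontinuity for $A$ and Fatou's lemma for the singular $B$ --- combine multiplicatively into the desired inequality $\mathcal{M}_h(u)\le m$.
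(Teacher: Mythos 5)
Your proposal is correct and follows essentially the same route as the paper's proof: the direct method with weak $H^1$ compactness from Lemmas \ref{lem:bounds}--\ref{lem:coercivity}, weak closedness of $H_{coll}^q$ via uniform convergence, weak lower semicontinuity of the Dirichlet factor, and Fatou's lemma for the singular potential factor, combined through the positivity of both factors. Your explicit extraction of convergent factor sequences $A(u_n)\to\alpha$, $B(u_n)\to\beta$ and your treatment of the set $\{u=0\}$ via the blow-up $V(x)\ge U_{min}|x|^{-\al}$ are slightly more careful renderings of steps the paper handles implicitly, but they do not constitute a different argument.
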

\begin{proof}
	Let us fix $r\in(0,r_{LJ})$ and $q\in\partial B_r$. Since the weak convergence implies the uniform one in $H^1$, we first observe that $H_{coll}^q$ is weakly closed in $H^1$.
	
	Now, let us consider a sequence $(u_n)_n\sset H_{coll}^q$ such that
	\[
	\mathcal{M}_h(u_n)\ninf\inf\limits_{u\in H_{coll}^q}\mathcal{M}_h(u).
	\]
	From \ref{lem:bounds} and \ref{lem:coercivity} we have that $(u_n)_n$ is bounded in $H^1$ and so $u_n\wconv u$ in $H^1$. In particular, since $H_{coll}^q$ is weakly closed, $u\in H_{coll}^q$. Moreover, from the Poincar\'e inequality, we deduce that 
	\begin{equation}\label{eq:coll_orb_step2}
	\int_0^1|\dot{u}|^2\,ds\leq\liminf\limits_{n\to\infty}\int_0^1|\dot{u}_n|^2\,ds
	\end{equation}
	and, from Lemma \ref{lem:bounds}, for every $n\in\N$ we obtain that 
	\[
	0<C\leq\mathcal{M}_h(u_n)<\infty
	\]
	and thus $V(u_n)\in L^1(0,1)$, for every $n\in\N$. This implies that the set $\{t\in[0,1]:\ u_n(t)=0\}$ has null measure and hence, since $u_n$ converges to $u$ uniformly, we have that $V(u_n)\to V(u)$ almost everywhere. We can now use Fatou's Lemma to deduce that $V(u)\in L^1(0,1)$ and that
	\[
	\int_0^1(h+V(u))\,ds\leq\liminf\limits_{n\to\infty}\int_0^1(h+V(u_n))\,ds.
	\]
	This, together with \eqref{eq:coll_orb_step2}, proves that 
	\[
	\mathcal{M}_h(u)\leq\liminf\limits_{n\to+\infty}\mathcal{M}_h(u_n)=\inf\limits_{u\in H_{coll}^q}\mathcal{M}_h(u).\qedhere
	\]
\end{proof}

\subsection{A compactness lemma}

Theorem \ref{thm:coll_orb} shows that once $h<0$ and $q\in B_{r_{LJ}}$ are fixed, we can always find at least a minimizer of the Maupertuis' functional in the space $H_{coll}^q$. In this way, if we fix $r\in(0,r_{LJ})$, we can define a function $\psi_h\colon\partial B_r\to\R^+$ such that
\[
\psi_h(q)\uguale\min\limits_{u\in H_{coll}^q}\mathcal{M}_h(u)\quad\mbox{for}\ q\in\partial B_r.
\] 
\begin{remark}\label{rem:hill}
	In the next proposition we are going to make use of the Jacobi-length functional
	\[
	\mathcal{L}_h(u)=\int_0^1|\dot{u}|\sqrt{h+V(u)}\,ds,
	\]
	which, for a path $u\in H^1([0,1];\R^2)$, is well-defined if and only if $|\dot{u}|>0$ and $h+V(u)>0$. Therefore, it makes sense to consider paths which live far from the boundary of the Hill's region $\partial\mathcal{R}_h$. Nonetheless, with our choice of $r_{LJ}$ provided in Lemma \ref{lem:lj}, this condition is already satisfied. Indeed, taking $q\in B_{r_{LJ}}$ and $u\in H_{coll}^q$, we can write
	\[
	h+V(u)\geq h+r_{LJ}^{-\al}U_{min}=-\frac{\al h}{2-\al}=C>0.
	\]
	Moreover, from the energy equation, we clearly have $|\dot{u}|>0$.
\end{remark}

\begin{proposition}\label{prop:jacobi_lip}
	For $h<0$ and $r\in(0,r_{LJ})$, the function $\psi_h$ is Lipschitz continuous on $\partial B_r$. In other words, there exists $L=L(r_{LJ})>0$ such that
	\[
	|\psi_h(q_2)-\psi_h(q_1)|\leq L|\vt_2-\vt_1|,\qquad\mbox{for every}\ q_1=re^{i\vt_1},q_2=re^{i\vt_2}\in\partial B_r.
	\]
\end{proposition}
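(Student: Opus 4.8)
The plan is to show Lipschitz continuity of $\psi_h$ by comparing minimizers at two nearby boundary points $q_1=re^{i\vt_1}$ and $q_2=re^{i\vt_2}$, using a competitor-path construction and the fact that the minimum value is realized (Theorem~\ref{thm:coll_orb}). The core idea is that to bound $\psi_h(q_2)-\psi_h(q_1)$ from above, I would take an optimal path $u_1\in H_{coll}^{q_1}$ attaining $\psi_h(q_1)$, and build an admissible competitor $\tilde u_2\in H_{coll}^{q_2}$ by prepending to $u_1$ a short connecting arc along $\partial B_r$ from $q_2$ to $q_1$; then $\psi_h(q_2)\leq\mathcal{M}_h(\tilde u_2)$, and I would estimate the difference $\mathcal{M}_h(\tilde u_2)-\mathcal{M}_h(u_1)$ in terms of $|\vt_2-\vt_1|$. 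By symmetry (swapping the roles of $q_1,q_2$), the reverse inequality gives the two-sided bound with a constant $L$ depending only on $r_{LJ}$.

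Concretely, I would connect $q_2$ to $q_1$ by the circular arc $\gamma(s)=re^{i(\vt_2+s(\vt_1-\vt_2))}$, whose length is $r|\vt_2-\vt_1|$ and which stays on $\partial B_r$, hence inside $\overline{B_r}$ and away from the singularity. Reparametrizing on a small time subinterval and concatenating with a time-rescaled copy of $u_1$ produces $\tilde u_2$; since $\mathcal{M}_h$ is invariant under time rescaling (used already in Lemma~\ref{lem:archetto}), the only new contributions come from the added arc. The added kinetic term contributes $\int|\dot\gamma|^2$, which is of order $r^2|\vt_2-\vt_1|^2$, and the added potential term $\int(h+V(\gamma))$ contributes a quantity controlled by $|\vt_2-\vt_1|$ times uniform bounds on $V$ and $\nabla V$ over the compact annular region $\partial B_r$ (where $V=r^{-\al}U(\vt)$ is smooth). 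Expanding the product $\mathcal{M}_h(\tilde u_2)=\bigl(\tfrac12\int|\dot{\tilde u}_2|^2\bigr)\bigl(\int(h+V(\tilde u_2))\bigr)$ and subtracting $\mathcal{M}_h(u_1)$, the cross terms are linear in the small added pieces; using the \emph{uniform} upper and lower bounds $0<C\le\mathcal{M}_h(u_1)<+\infty$ from Lemma~\ref{lem:bounds} (so that $\int|\dot u_1|^2$ and $\int(h+V(u_1))$ are both bounded above and below independently of $\vt_1$), one extracts a bound of the form $\mathcal{M}_h(\tilde u_2)-\mathcal{M}_h(u_1)\le L|\vt_2-\vt_1|$ for $|\vt_2-\vt_1|$ small, with a higher-order $O(|\vt_2-\vt_1|^2)$ remainder absorbed by shrinking.

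The main obstacle I anticipate is obtaining a bound on $\int_0^1|\dot u_1|^2$ and $\int_0^1(h+V(u_1))$ that is \emph{uniform} in the boundary point, which is what prevents the Lipschitz constant from degenerating. The lower bound $h+V(u_1)\ge C>0$ is immediate from Remark~\ref{rem:hill}, and $\mathcal{M}_h(u_1)=\psi_h(q_1)$ is bounded above uniformly by evaluating $\mathcal{M}_h$ on a fixed reference competitor (for instance the radial homothetic path), so that the product being bounded forces each factor into a bounded range; combined with the lower bound $\int|\dot u_1|^2\ge r^2$ from the Poincar\'e-type estimate in Lemma~\ref{lem:bounds}, both factors are pinned between positive constants uniformly in $\vt_1$. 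The remaining care is purely bookkeeping: choosing the splitting time for the concatenation, verifying the competitor stays in $\overline{B_r}$ so it is admissible in $H_{coll}^{q_2}$, and checking that the constant $L$ depends only on $r_{LJ}$ (through $r<r_{LJ}$, the uniform $\mathcal{C}^1$-norm of $U$ on $\cerchio$, and $h$), not on the individual minimizer chosen.
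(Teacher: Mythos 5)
Your competitor construction (prepend a circular boundary arc to a minimizer from the nearby point, then use uniform two-sided bounds on the two factors) is the same basic mechanism as the paper's, but your execution directly on the Maupertuis functional contains a genuine quantitative error: the two estimates you claim for the added arc are mutually incompatible, for \emph{every} choice of the arc's time parametrization. If the arc $\gamma$, of length $r|\vt_2-\vt_1|$, is traversed on a time interval of length $\ve$, then by Cauchy--Schwarz
\[
\int|\dot\gamma|^2\,dt\ \geq\ \frac{\bigl(r|\vt_2-\vt_1|\bigr)^2}{\ve},
\]
so your claim that the added kinetic term is $O\bigl(r^2|\vt_2-\vt_1|^2\bigr)$ forces $\ve$ to be bounded below by a positive constant independent of $|\vt_2-\vt_1|$. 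But then, since $h+V\geq C>0$ on $B_{r_{LJ}}$ (Remark \ref{rem:hill}), the added potential term satisfies $\int (h+V(\gamma))\,dt\geq C\ve$, which is of order one, \emph{not} $O(|\vt_2-\vt_1|)$ as you claim. With an order-one added potential term, the cross term $\bigl(\int|\dot u_1|^2\bigr)\cdot\bigl(\int_{\mathrm{arc}}(h+V(\gamma))\bigr)$ in the expansion of the product no longer vanishes as $q_2\to q_1$, and the Lipschitz bound collapses. So the step ``the cross terms are linear in the small added pieces'' fails as written.

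The fix stays inside your strategy but requires allocating time correctly: traverse the arc in time $\ve\sim|\vt_2-\vt_1|$ (or optimize over $\ve$). Then the added kinetic term is $r^2|\vt_2-\vt_1|^2/\ve=O(|\vt_2-\vt_1|)$ --- linear, not quadratic --- and the added potential term is $\leq\ve\,(h+r^{-\al}U_{max})=O(|\vt_2-\vt_1|)$; both cross terms are then linear in $|\vt_2-\vt_1|$, with constants controlled by the uniform bounds on $\int|\dot u_1|^2$ and $\int(h+V(u_1))$ that you correctly establish, and by $r\sqrt{h+r^{-\al}U_{max}}\leq r^{1-\al/2}\sqrt{U_{max}}\leq r_{LJ}^{1-\al/2}\sqrt{U_{max}}$, so the final constant indeed depends only on $r_{LJ}$, $U_{max}$, $\al$, $h$. (One minor extra caveat: for the uniform upper bound on $\psi_h$ you cannot take the linearly parametrized radial segment when $\al\geq 1$, since its potential integral diverges; use the homothetic solution's own parametrization, or $u(s)=(1-s)^\beta q$ with $\beta\in(1/2,1/\al)$.) This bookkeeping is exactly what the paper's proof automates by passing to the Jacobi length: using $2\min_{H_{coll}^q}\mathcal{M}_h=(\min_{H_{coll}^q}\mathcal{L}_h)^2$, it suffices to prove the Lipschitz estimate for $\omega_h(q)=\min_{H_{coll}^q}\mathcal{L}_h$, and since $\mathcal{L}_h$ is parametrization-invariant and additive under concatenation, the boundary arc contributes exactly its Jacobi length, $\mathcal{L}_h(u_{arc})\leq r_{LJ}^{1-\al/2}\sqrt{U_{max}}\,|\vt_2-\vt_1|$, with no time-splitting to choose and no product to expand. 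Either route works, but yours needs the corrected parametrization of the arc to be a proof.
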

\begin{proof}
	Fix $h<0$ and $r\in(0,r_{LJ})$. Given $q\in\partial B_r$, for a path $u\in H_{coll}^q$ we can define the Jacobi-length functional
	\[
	\mathcal{L}_h(u)=\int_0^1|\dot{u}|\sqrt{h+V(u)}\,dt,
	\] 		
	which is linked to $\mathcal{M}_h$ in this way:
	\[
	2\min\limits_{H_{coll}^q}\mathcal{M}_h=(\min\limits_{H_{coll}^q}\mathcal{L}_h)^2.
	\]
	Therefore, if we define the function $\omega_h(q)\uguale\min\limits_{H_{coll}^q}\mathcal{L}_h$ for $q\in\partial B_r$ and we show that it is Lipschitz continuous we are done.
	
	Fix $q_1=re^{i\vt_1},q_2=re^{i\vt_2}\in\partial B_r$ and consider the circular path
	\[
	u_{arc}(t)=re^{i((1-t)\vt_1+t\vt_2)},\quad\mbox{for}\ t\in[0,1].
	\]
	We have
	\[
	\begin{aligned}
	\mathcal{L}_h(u_{arc})&=r|\vt_2-\vt_1|\int_0^1\sqrt{h+r^{-\al}U(\vt(t))}\,dt \\
	&<L|\vt_2-\vt_1|,\quad\text{where}\ L=L(r_{LJ})=r_{LJ}^{1-\al/2}\sqrt{U_{max}}.
	\end{aligned}
	\]
	Now, since $\mathcal{L}_h$ is a length, it is invariant under time rescaling and so we can write
	\[
	\min_{H_{coll}^{q_1}}\mathcal{L}_h\leq\mathcal{L}_h(u_{arc})+\min_{H_{coll}^{q_2}}\mathcal{L}_h.
	\]		
	Finally, from the definition of $\omega_h$, we obtain
	\[
	\omega_h(q_1)\leq \omega_h(q_2)+L|\vt_2-\vt_1|
	\]
	and, with the same argument
	\[
	\omega_h(q_2)\leq \omega_h(q_1)+L|\vt_2-\vt_1|. \qedhere
	\]
\end{proof}

\begin{corollary}\label{cor:lip}
	Given $h<0$, $r^*\in(0,r_{LJ})$, $h^*\in(h,0)$ and $\vt^*\in\cerchio$, consider three sequences $(h_k)_k\sset\R$, $(r_k)_k\sset\R^+$ and $(\vt_k)_k\sset\cerchio$ such that
	\begin{itemize}
		\item $h_k\in(h,0)$ for every $k\in\N$ and $h_k\to h^*$ as $k\to+\infty$\emph{;}
		\item $0<r_k<r_{LJ}$ for every $k\in\N$ and $r_k\to r^*$ as $k\to+\infty$\emph{;}
		\item $q_k=r_ke^{i\vt_k}\to q^*=r^*e^{i\vt^*}$ as $k\to+\infty$.
	\end{itemize}
	Then
	\[
	\min\limits_{H_{coll}^{q_k}}\mathcal{M}_{h_k}\leq\min\limits_{H_{coll}^{q^*}}\mathcal{M}_{h^*}+O(|\vt^*-\vt_k|)+O(|h^*-h_k|)+O(|r^*-r_k|),
	\]
	as $k\to+\infty$.
\end{corollary}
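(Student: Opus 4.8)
The plan is to prove the estimate as an \emph{upper bound} only, which is all the statement asks for, by deforming a minimizer of the ``source'' problem at $(h^*,r^*,\vt^*)$ into an admissible competitor for the ``target'' problem at $(h_k,r_k,\vt_k)$ through three \emph{independent one-parameter moves}, one for each varying quantity. Concretely, I would factor the passage through the two intermediate triples $(h^*,r^*,\vt_k)$ and $(h^*,r_k,\vt_k)$ and bound the three consecutive increments of the minimal value $\psi_h(q)=\min_{H_{coll}^q}\mathcal{M}_h$ separately, chaining them by the triangle inequality.

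Before the three moves, I would record uniform a priori bounds for the minimizers involved. Since for $k$ large all parameters stay in a fixed compact neighbourhood of $(h^*,r^*,\vt^*)$, and $r_{LJ}=r_{LJ}(h)$ is increasing in $h$, the inequalities $r_k,r^*<r_{LJ}(h)\le\min\{r_{LJ}(h_k),r_{LJ}(h^*)\}$ ensure that a minimizer exists at every triple (Theorem \ref{thm:coll_orb}) and that the Hill bound $\int_0^1(h+V(u))\ge C_0:=-\al h/(2-\al)>0$ of Remark \ref{rem:hill} holds uniformly. A uniform \emph{upper} bound on $\psi$ follows by testing $\mathcal{M}_h$ on the radial competitor $t\mapsto r(1-t)^{2/(2+\al)}e^{i\vt}$, which is admissible and has finite, parameter-continuous energy precisely because $\al<2$. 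Together with $\int_0^1|\dot u|^2\ge(\int_0^1|\dot u|)^2\ge|q|^2\ge(r^*/2)^2$ (Euclidean displacement), these give uniform two-sided bounds on $\int_0^1|\dot u|^2$ and on $\int_0^1 V(u)=\int_0^1(h+V(u))-h$, hence on $\int_0^1|\dot u|$ by Cauchy--Schwarz. Such bounds convert every relative first-order error below into an absolute $O(\cdot)$.

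For the \emph{angle} move I would invoke Proposition \ref{prop:jacobi_lip}: the Jacobi length minimum $\omega_{h^*}(q)=\min_{H_{coll}^q}\mathcal{L}_{h^*}$ is Lipschitz in $\vt$ at $r=r^*$, and using $2\psi_{h^*}=\omega_{h^*}^2$ with $\omega_{h^*}$ bounded one gets $\psi_{h^*}(r^*e^{i\vt_k})\le\psi_{h^*}(r^*e^{i\vt^*})+O(|\vt_k-\vt^*|)$. For the \emph{radius} move, let $u$ minimize $\mathcal{M}_{h^*}$ at $(r^*,\vt_k)$ and set $\lambda=r_k/r^*$; the scaled path $v=\lambda u$ lies in $H_{coll}^{r_ke^{i\vt_k}}$ since $|v|\le\lambda r^*=r_k$, and the $(-\al)$-homogeneity $V(\lambda u)=\lambda^{-\al}V(u)$ gives the exact expansion $\mathcal{M}_{h^*}(v)=\lambda^2\mathcal{M}_{h^*}(u)+\lambda^2(\lambda^{-\al}-1)\int_0^1|\dot u|^2\int_0^1 V(u)=\psi_{h^*}(r^*e^{i\vt_k})+O(|r_k-r^*|)$, because $\lambda-1=O(|r_k-r^*|)$ and all integrals are uniformly bounded. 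Finally the \emph{energy} move is exact: the space $H_{coll}^{q_k}$ does not depend on the energy, so for $w$ minimizing $\mathcal{M}_{h^*}$ at $(r_k,\vt_k)$ one has $\mathcal{M}_{h_k}(w)-\mathcal{M}_{h^*}(w)=(h_k-h^*)\int_0^1|\dot w|^2=O(|h_k-h^*|)$, since $\mathcal{M}_h$ is affine in $h$ at fixed path. Adding the three increments yields the claim.

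I expect the genuine difficulty to lie not in any single move---each is a one-line Taylor expansion or a direct citation---but in making the three $O(\cdot)$ constants \emph{uniform}. The crucial point is that the Lagrange--Jacobi threshold must be respected simultaneously for all energies in play, so that both the existence of minimizers and the lower bound $C_0$ persist along the whole chain; once the uniform a priori bounds on $\int_0^1|\dot u|^2$, $\int_0^1 V(u)$ and $\int_0^1|\dot u|$ are secured, the remainder is routine.
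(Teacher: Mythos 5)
Your proof is correct and takes essentially the same route as the paper's: both factor the increment of $\psi_h(q)=\min_{H_{coll}^q}\mathcal{M}_h$ through intermediate configurations via the triangle inequality and control the three moves by exactly the same mechanisms --- Proposition \ref{prop:jacobi_lip} for the angle, the $-\al$-homogeneity scaling $u\mapsto (r_k/r^*)\,u$ for the radius, and the affinity of $\mathcal{M}_h$ in $h$ for the energy. The only differences are cosmetic (the paper peels off the radius and angle moves at energy $h_k$ and does the energy move at the fixed point $q^*$, whereas you work at $h^*$ and move the energy last at $q_k$), together with your explicit uniform a priori bounds, which the paper leaves implicit in its constants $C_1,C_2$.
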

\begin{proof}
	Fix $h<0$, $r^*\in(0,r_{LJ})$, $h^*\in(h,0)$ and $\vt^*\in\cerchio$. Consider the three sequences as in the statement and fix $k\in\N$. We can write 
	\[
	\psi_{h_k}(q_k)=\min\limits_{H_{coll}^{q_k}}\mathcal{M}_{h_k},\quad \psi_{h^*}(q^*)=\min\limits_{H_{coll}^{q^*}}\mathcal{M}_{h^*},
	\]
	so that
	\[
	\psi_{h_k}(q_k)-\psi_{h^*}(q^*)=\psi_{h_k}(q_k)-\psi_{h_k}(q^*)+\psi_{h_k}(q^*)-\psi_{h^*}(q^*).
	\]
	Let us start by the estimate of the term $\psi_{h_k}(q^*)-\psi_{h^*}(q^*)$ on the right-hand side. Consider $u^*\in H_{coll}^{q^*}$ such that
	\[
	\psi_{h}(q^*)=\min_{H_{coll}^{q^*}}\mathcal{M}_{h^*}=\mathcal{M}_{h^*}(u^*).
	\]
	For the minimality of $u^*$, we obtain
	\[
	\begin{aligned}
	\psi_{h_k}(q^*)-\psi_{h^*}(q^*)&\leq\mathcal{M}_{h_k}(u^*)-\mathcal{M}_{h^*}(u^*) \\
	&\leq|h^*-h_k|\int_0^1|\dot{u}^*|^2\,ds\leq C_1|h^*-h_k|,
	\end{aligned}
	\]
	with $C_1>0$. Now, concerning the term $\psi_{h_k}(q_k)-\psi_{h_k}(q^*)$, if we consider the point
	\[
	q_k'=\frac{r^*}{r_k}q_k\in\partial B_{r^*}
	\]
	we can write
	\[
	\psi_{h_k}(q_k)-\psi_{h_k}(q^*)=\psi_{h_k}(q_k)-\psi_{h_k}(q_k')+\psi_{h_k}(q_k')-\psi_{h_k}(q^*).
	\]
	Take $v_k\in H_{coll}^{q_k'}$ such that
	\[
	\psi_{h_k}(q_k')=\min_{H_{coll}^{q_k'}}\mathcal{M}_{h_k}=\mathcal{M}_{h_k}(v_k)
	\]
	and define the path
	\[
	\tilde{v}_k=\frac{r_k}{r^*}v_k\in H_{coll}^{q_k}.
	\]    	
	We can write
	\[
	\begin{aligned}
	\psi_{h_k}(q_k)-\psi_{h_k}(q_k')&\leq\mathcal{M}_{h_k}(\tilde{v}_k)-\mathcal{M}_{h_k}(v_k) \\
	&=\int_0^1|\dot{\tilde{v}}_k|^2\,ds\int_0^1(h_k+V(\tilde{v}_k))\,ds-
	\int_0^1|\dot{v}_k|^2\,ds\int_0^1(h_k+V(v_k))\,ds \\
	&=\int_0^1|\dot{v}_k|^2\,ds\int_0^1\left[h_k\left(\frac{r_k}{r^*}\right)^2-h_k+\left(\frac{r_k}{r^*}\right)^{2-\al}V(v_k)-V(v_k)\right]\,ds \\
	&=\int_0^1|\dot{v}_k|^2\,ds\int_0^1\left[h_k\frac{r_k+r^*}{(r^*)^2}(r_k-r^*)+V(v_k)\frac{r_k^{2-\al}-(r^*)^{2-\al}}{(r^*)^{2-\al}}\right]\,ds \\
	&\leq C_2(|r^*-r_k|),
	\end{aligned}
	\]
	with $C_2>0$. Finally, since $q_k',q^*\in\partial B_{r^*}$, we can apply Proposition \ref{prop:jacobi_lip} to obtain
	\[
	\psi_{h_k}(q_k')-\psi_{h_k}(q^*)\leq L|\vt^*-\vt_k|. \qedhere
	\] 
\end{proof}

Now we prove the following compactness lemma on sequences of minimizers of the Maupertuis' functional.

\begin{lemma}\label{lem:min_comp}
	Given $h<0$, $r^*\in(0,r_{LJ})$, $h^*\in(h,0)$ and $\vt^*\in\cerchio$, consider three sequences $(h_k)_k\sset\R^+$, $(r_k)_k\sset\R$ and $(\vt_k)_k\sset\cerchio$ such that
	\begin{itemize}
		\item $h_k\in(h,0)$ for every $k\in\N$ and $h_k\to h^*$ as $k\to+\infty$\emph{;}
		\item $0<r_k<r_{LJ}$ for every $k\in\N$ and $r_k\to r^*$ as $k\to+\infty$\emph{;}
		\item $q_k=r_ke^{i\vt_k}\to q^*=r^*e^{i\vt^*}$ as $k\to+\infty$.
	\end{itemize} 
	With a slight abuse of notation, define the classes
	\[
	H_{coll}^k=\{u\in H^1([0,1];\R^2):\ u(0)=q_k,\ u(1)=0,\ |u(s)|\leq r_k\ \mbox{for every}\ s\in[0,1]\}
	\]
	and
	\[
	H_{coll}^*=\{u\in H^1([0,1];\R^2):\ u(0)=q^*,\ u(1)=0,\ |u(s)|\leq r^*\ \mbox{for every}\ s\in[0,1]\}.
	\]
	If $u_k$ is a minimizer of $\mathcal{M}_{h_k}$ in $H_{coll}^k$ for every $k\in\N$, then
	\begin{itemize}
		\item[$(i)$] $u_k\to u^*$ in $H^1([0,1];\R^2)$;
		\item[$(ii)$] $u_k\to u^*$ in $C^2([0,b];\R^2)$, for every $b<1$.
	\end{itemize}
	In particular, $u^*$ is a minimizer of $\mathcal{M}_{h^*}$ in the class of paths $H_{coll}^*$.		
\end{lemma}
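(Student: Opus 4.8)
The plan is to run the direct method in families, combining the uniform estimates of Lemmata \ref{lem:bounds}--\ref{lem:coercivity} with the one-sided comparison of Corollary \ref{cor:lip}, and then to exploit the Euler--Lagrange equation \eqref{eq:omega_eq} to upgrade weak convergence to classical convergence away from the collision instant.

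First I would fix the uniform bounds. Since $r_k < r_{LJ}$ and $h_k \to h^* < 0$, the Lagrange--Jacobi estimate \eqref{eq:bounds_step} holds for the energy $h_k$ on $B_{r_k}$ uniformly in $k$, giving a constant $c > 0$ with $\int_0^1 (h_k + V(u_k)) \geq c$ for $k$ large; meanwhile Corollary \ref{cor:lip} yields $\mathcal{M}_{h_k}(u_k) = \min_{H_{coll}^k}\mathcal{M}_{h_k} \leq \min_{H_{coll}^*}\mathcal{M}_{h^*} + o(1)$, so $\mathcal{M}_{h_k}(u_k)$ is bounded above. Dividing, $\int_0^1 |\dot u_k|^2$ is bounded, and since $|u_k| \leq r_k \leq r_{LJ}$, the sequence $(u_k)$ is bounded in $H^1$. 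Passing to a subsequence, $u_k \rightharpoonup u^*$ weakly in $H^1$ and, by the compact embedding in dimension one, uniformly on $[0,1]$. The boundary values and the constraint pass to the limit, so $u^*(0) = \lim q_k = q^*$, $u^*(1) = 0$, $|u^*| \leq r^*$, whence $u^* \in H_{coll}^*$.

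Next I would identify $u^*$ as a minimizer and promote the convergence to be strong. Writing $A_k = \int_0^1|\dot u_k|^2$ and $B_k = \int_0^1(h_k + V(u_k))$, weak lower semicontinuity gives $\int|\dot u^*|^2 \leq \liminf A_k$, while Fatou's Lemma (exactly as in Theorem \ref{thm:coll_orb}, using $u_k \to u^*$ a.e. and $h_k \to h^*$) gives $\int(h^* + V(u^*)) \leq \liminf B_k$. As both factors are positive and bounded below, the product is lower semicontinuous, so $\mathcal{M}_{h^*}(u^*) \leq \liminf_k \mathcal{M}_{h_k}(u_k)$; combining with the upper bound from Corollary \ref{cor:lip} and $\mathcal{M}_{h^*}(u^*) \geq \min_{H_{coll}^*}\mathcal{M}_{h^*}$ forces $\mathcal{M}_{h^*}(u^*) = \min_{H_{coll}^*}\mathcal{M}_{h^*} = \lim_k \mathcal{M}_{h_k}(u_k)$, so $u^*$ is a minimizer. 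Because the product's limit is attained, $\int(h^*+V(u^*)) > 0$ (Remark \ref{rem:hill}), and $A_k \geq r_k^2$ stays bounded below, a short argument (splitting along subsequences on which $A_k$, $B_k$ converge separately) forces $A_k \to \int|\dot u^*|^2$; together with $\dot u_k \rightharpoonup \dot u^*$ in $L^2$ this yields $\dot u_k \to \dot u^*$ strongly in $L^2$, hence $u_k \to u^*$ in $H^1$, which is $(i)$.

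Finally I would use the equation to obtain $(ii)$. By Theorem \ref{thm:maupertuis} each minimizer solves $\omega_k^2 \ddot u_k = \nabla V(u_k)$ on $[0,1)$, and $\omega_k \to \omega_*$ since both integral factors converge. By Lemma \ref{lem:archetto} the limit satisfies $u^*(t) \neq 0$ on $[0,1)$, so $\min_{[0,b]}|u^*| =: m > 0$ for each $b < 1$; uniform convergence then gives $|u_k| \geq m/2$ on $[0,b]$ for $k$ large, confining the $u_k$ to a fixed compact annulus where $V \in \mathcal{C}^2$. There $\nabla V(u_k) \to \nabla V(u^*)$ uniformly, so $\ddot u_k = \omega_k^{-2}\nabla V(u_k)$ is uniformly bounded and converges uniformly to $\omega_*^{-2}\nabla V(u^*) = \ddot u^*$; the resulting equi-Lipschitz bound on $\dot u_k$, combined with its $L^2$-limit, upgrades via Arzel\`a--Ascoli to uniform convergence of $\dot u_k$, giving $u_k \to u^*$ in $C^2([0,b])$. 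The main obstacle is precisely the singularity at $t=1$: the functional is a product of integrals one of which involves the singular potential, so neither the lower semicontinuity nor the strong convergence is automatic, and $C^2$ control is claimed only on $[0,b]$ because the uniform bound $|u_k|\geq m/2$ degenerates as $b \to 1$.
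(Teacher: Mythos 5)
Your proof is correct and, for part $(i)$ and the minimality of the limit, it follows the paper's strategy almost verbatim: the uniform Lagrange--Jacobi lower bound $h_k+V(u_k)\geq C>0$, Poincar\'e plus Lemma \ref{lem:bounds} for $H^1$-boundedness, Fatou's Lemma for lower semicontinuity, and Corollary \ref{cor:lip} for the matching upper bound (the paper phrases this last step as a contradiction with a hypothetical better minimizer $u_{min}$, while you run the sandwich directly and extract $A_k\to\int|\dot u^*|^2$ by a subsequence-splitting argument; these are the same ingredients, and your write-up of the strong $H^1$ convergence is in fact more explicit than the paper's one-line contradiction). The one genuine divergence is in part $(ii)$: the paper uses the Lagrange--Jacobi monotonicity of $|u_k(t)|$ to produce times $b_k$ with $|u_k(b_k)|=\tilde r=\frac12\inf_k r_k$ and proves $C^2$ convergence on $[0,b]$ with $b=\inf_k b_k$, which as written only yields one specific $b$; you instead apply Lemma \ref{lem:archetto} to the limit arc to get $u^*(t)\neq 0$ on $[0,1)$, hence $\min_{[0,b]}|u^*|=m>0$ for an arbitrary $b<1$, and use uniform convergence to confine the $u_k$ in a fixed annulus on $[0,b]$. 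Your route has the advantage of delivering the statement exactly as claimed (``for every $b<1$''), at the price of invoking the nondegeneracy of the limit arc rather than the monotonicity of the approximating ones; both arguments then conclude identically via the Euler--Lagrange equation $\omega_k^2\ddot u_k=\nabla V(u_k)$, the convergence $\omega_k\to\omega_*$, and Ascoli--Arzel\`a.
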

\begin{proof}
	Fix $h<0$, $r^*\in(0,r_{LJ})$, $h^*\in(h,0)$ and $\vt^*\in\cerchio$ and consider the sequences $(h_k)_k$, $(r_k)_k$ and $(\vt_k)_k$ as in the statement. For every $k\in\N$, consider a minimizer $u_k$ of $\mathcal{M}_{h_k}$ in $H_{coll}^k$. For $k\in\N$ and for every $s\in[0,1]$, following Remark \ref{rem:hill}, we have 
	\[
	h_k+V(u_k(s))>h+r_k^{-\al}U_{min}>h+r_{LJ}^{-\al}U_{min}=C>0.
	\]
	In this way, we can write
	\[
	\inf\limits_{H_{coll}^k}\mathcal{M}_{h_k}=\mathcal{M}_{h_k}(u_k)=\int_0^1|\dot{u}_k|^2\,ds\int_0^1(h_k+V(u_k))\,ds>C\int_0^1|\dot{u}_k|^2\,ds
	\]
	and so, by Lemma \ref{lem:bounds} and the Poincar\'e inequality, the sequence $(u_k)_k$ is bounded in $H^1$ and hence $u_k\wconv u^*$ in $H^1$ and uniformly. So, from Fatou's lemma, we have
	\[
	\mathcal{M}_{h^*}(u^*)\leq\liminf\limits_{k\to\infty}\mathcal{M}_{h_k}(u_k).
	\]
	Now, suppose by contradiction that there exists a minimizer $u_{min}$ of $\mathcal{M}_{h^*}$ in $H_{coll}^*$ such that 
	\[
	\mathcal{M}_{h^*}(u_{min})<\mathcal{M}_{h^*}(u^*).
	\]
	From Corollary \ref{cor:lip}, we actually obtain that, as $k\to+\infty$
	\[
	\mathcal{M}_{h_k}(u_k)\leq\mathcal{M}_{h^*}(u_{min})+O(|\vt^*-\vt_k|)+O(|h^*-h_k|)+O(|r^*-r_k|)
	\]
	and so
	\[
	\liminf\limits_{k\to\infty}\mathcal{M}_{h_k}(u_k)\leq\mathcal{M}_{h^*}(u_{min}),
	\]
	which leads to a contradiction. Therefore, $u^*$ is a minimizer. Moreover, the same argument leads to a strong convergence in $H^1$ assuming by contradiction that
	\[
	\int_0^1|\dot{u}^*|^2\,ds<\limsup\limits_{k\to\infty}\int_0^1|\dot{u}_k|^2\,ds.
	\] 
	
	Finally, since $r_k\to r^*>0$, we have $\inf_k r_k>0$ and so we can consider $B_{\tilde{r}}$ with $\tilde{r}=\frac12\inf_k r_k$. From Lemma \ref{lem:lj}, there exists a sequence $(b_k)_k$ such that $|u_k(b_k)|=\tilde{r}$ and $0<b_k<1$ for every $k\in\N$. Defining $b=\inf_kb_k$, again from Lemma \ref{lem:lj} we deduce that $0<b<1$. In this way, we obtain
	\[
	\omega^2\ddot{u}_k(t)=\nabla V(u_k(t))\quad\mbox{for}\ t\in[0,b],\ \text{for every}\ k\in\N
	\]
	and thus $\nabla V(u_k)$ converges uniformly to $\nabla V(u^*)$ on $[0,b]$. This proves that $u_k$ converges in $C^2([0,b])$.
\end{proof}

\section{Proof of the main theorem}\label{sec:main}

In general, the minimizer provided by Theorem \ref{thm:coll_orb} is not unique. However, for a particular class of collision trajectories, we have the following result.

\begin{lemma}\label{lem:un_hom}
	Assume \eqref{hyp:vt_star}-\eqref{hyp:V1}. Given $h<0$, $r^*\in(0,[-U(\vt^*)/h]^{1/\al}]$ and taking $q^*=r^*e^{i\vt^*}$, there exists a unique minimizer for the Maupertuis' functional $\mathcal{M}_h$ in $H_{coll}^{q^*}$. This arc is nothing but the monotone $\vt^*$-homothetic collision trajectory.
\end{lemma}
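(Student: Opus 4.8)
The plan is to pass from the Maupertuis' functional to the reparametrisation-invariant Jacobi-length $\mathcal{L}_h$, which turns the problem into a purely geometric one. Recall the relation $2\min_{H_{coll}^{q^*}}\mathcal{M}_h=(\min_{H_{coll}^{q^*}}\mathcal{L}_h)^2$ established in the proof of Proposition \ref{prop:jacobi_lip} through a Cauchy--Schwarz inequality: its equality case shows that the minimisers of $\mathcal{M}_h$ are exactly the length-minimising curves, equipped with their natural (arc-length proportional) parametrisation. Hence it suffices to prove that the radial segment joining $q^*$ to the origin along the direction $\vt^*$ is the \emph{unique} minimiser of $\mathcal{L}_h$; pinning down the Maupertuis parametrisation afterwards reproduces precisely the monotone $\vt^*$-homothetic trajectory. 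The geometric heart of the argument is that any angular excursion is penalised twice: it lengthens the Euclidean arc, and, since $\vt^*$ is a global minimum of $U$ by \eqref{hyp:vt_star}, it raises the conformal weight $\sqrt{h+r^{-\al}U(\vt)}$.

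To obtain the lower bound I would write an arbitrary competitor $u\in H_{coll}^{q^*}$ in polar form $u=re^{i\vt}$ with $r=|u|\in H^1$ and $|\dot r|\le|\dot u|$ a.e. Using $|\dot u|^2=\dot r^2+r^2\dot\vt^2\ge\dot r^2$ together with $U(\vt)\ge U(\vt^*)=U_{min}$, one gets
\[
\mathcal{L}_h(u)=\int_0^1|\dot u|\sqrt{h+r^{-\al}U(\vt)}\,ds\ge\int_0^1|\dot r|\sqrt{h+r^{-\al}U_{min}}\,ds.
\]
Introducing the strictly increasing primitive $\Gamma(\rho)=\int_0^\rho\sqrt{h+\sigma^{-\al}U_{min}}\,d\sigma$ (finite since $\sigma^{-\al/2}$ is integrable near $0$ for $\al<2$, and real on $(0,r^*]$ exactly because $r^*\le[-U(\vt^*)/h]^{1/\al}$), the fundamental theorem of calculus and the triangle inequality give
\[
\int_0^1|\dot r|\,\Gamma'(r)\,ds\ge\left|\Gamma(r(1))-\Gamma(r(0))\right|=\Gamma(r^*).
\]
The monotone radial path $\bar u(s)=\rho(s)e^{i\vt^*}$ turns every one of these inequalities into an equality, so $\mathcal{L}_h(u)\ge\mathcal{L}_h(\bar u)=\Gamma(r^*)>0$ for all competitors; thus $\bar u$ is a length-minimiser and the corresponding $\mathcal{M}_h$-infimum is positive.

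For uniqueness I would take an arbitrary minimiser $u$ of $\mathcal{M}_h$. By Lemma \ref{lem:archetto} it avoids the origin on $(0,1)$ and by Theorem \ref{thm:maupertuis} it is smooth there, so $\vt$ is a well-defined smooth function on $(0,1)$ and the whole chain above must hold with equality. On $(0,1)$ one has $r<r^*$, so the weight $\sqrt{h+r^{-\al}U_{min}}$ is strictly positive; equality in the pointwise bound then forces $r^2\dot\vt^2=0$ a.e., hence $\dot\vt\equiv0$ since $r>0$, and therefore $\vt\equiv\vt^*$ by the boundary condition $\vt(0)=\vt^*$. Equality in the last inequality additionally forces $r$ to be monotone. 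Imposing the natural parametrisation then identifies $u$ with the monotone $\vt^*$-homothetic collision arc.

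The step I expect to be the crux is the equality analysis: one has to guarantee that the two a.e. inequalities can be simultaneously saturated only by the radial configuration and that the resulting identity $\vt\equiv\vt^*$ is \emph{global}. This is exactly where the global minimality $U\ge U(\vt^*)$ feeds the lower bound, and where Lemma \ref{lem:archetto} is indispensable, since without interior non-collision the angle $\vt$ could not be defined throughout $(0,1)$ and the vanishing of the angular speed could not be converted into $\dot\vt\equiv0$. It is worth noting that the non-degeneracy $U''(\vt^*)>0$ is not needed here: uniqueness is driven by the arc-length term, so plain global minimality of $\vt^*$ suffices even if $U$ attains its minimum elsewhere.
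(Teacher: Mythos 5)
Your argument is correct in substance and rests on the same geometric core as the paper's proof: the polar splitting $|\dot u|^2=\dot r^2+r^2\dot\vt^2$, the global bound $U(\vt)\geq U_{min}=U(\vt^*)$ from \eqref{hyp:vt_star}, and monotonicity of $r$. The difference is organizational, but real: the paper estimates $\mathcal{M}_h$ directly, writing $\mathcal{M}_h(u)\geq\int_0^1\dot r^2\int_0^1(h+r^{-\al}U_{min})\geq\mathcal{M}_h(u_{hom})$ and disposing of the radial comparison with a one-line appeal to time-rescaling invariance, whereas you pass to the Jacobi length via $2\min\mathcal{M}_h=(\min\mathcal{L}_h)^2$ (which the paper itself only uses in Proposition \ref{prop:jacobi_lip}) and close the radial comparison with the primitive $\Gamma$ and the total-variation inequality. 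Your route is slightly longer but buys two things the paper leaves implicit: the explicit value $\Gamma(r^*)$ of the minimal length, and a transparent equality analysis (Cauchy--Schwarz equality pins the parametrization, the pointwise equalities pin $\dot\vt\equiv 0$, the variation equality pins monotonicity of $r$). Your closing observation that $U''(\vt^*)>0$ plays no role here is also accurate.

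One step deserves scrutiny: the claim that $r<r^*$ on $(0,1)$, which you use to get strict positivity of the weight. For $r^*$ \emph{strictly} below $[-U(\vt^*)/h]^{1/\al}$ the claim is unnecessary: the constraint $r\leq r^*$ already gives $h+r^{-\al}U_{min}\geq h+(r^*)^{-\al}U_{min}>0$, so the equality analysis goes through verbatim. But at the endpoint $r^*=[-U(\vt^*)/h]^{1/\al}$, which the statement allows, $q^*$ lies on the Hill boundary, the weight vanishes at $r=r^*$, and the claim $r<r^*$ on $(0,1)$ is not justified: a minimizer could a priori linger on the sphere $\{|u|=r^*\}$. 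Indeed, at this radius, paths that sit at $q^*$ on an initial interval $[0,s_0]$ and then descend radially with the natural parametrization satisfy both Cauchy--Schwarz equality (both sides vanish on the plateau, since $h+V(q^*)=0$) and $\mathcal{L}_h=\Gamma(r^*)$, hence they are also minimizers of $\mathcal{M}_h$; uniqueness as parametrized $H^1$ paths therefore fails in this borderline case, and only uniqueness of the geometric arc survives. This defect is not yours alone -- the paper's proof (via ``invariance under time rescaling'') glosses over exactly the same point -- but if you want your argument airtight you should either restrict to $r^*<[-U(\vt^*)/h]^{1/\al}$, or handle the plateau set $\{s:\,r(s)=r^*\}$ separately and state the endpoint conclusion modulo the waiting-time degeneracy.
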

\begin{proof}
	Let us define the homothetic trajectory as $u_{hom}(s)=r_{hom}(s)e^{i\vt^*}$, with $r_{hom}(0)=r^*$, $r_{hom}(1)=0$ and $\dot{r}_{hom}(s)<0$ for every $s\in(0,1)$. For every $u\in H_{coll}^{q^*}$ we can write $u(s)=r(s)e^{i\vt(s)}$, so that
	\[
	\begin{aligned}
	\mathcal{M}_h(u) &=\int_0^1|\dot{r}e^{i\vt}+ir\dot{\vt}e^{i\vt}|^2\,ds\int_0^1(h+r^{-\al}U(\vt))\,ds \\
	&\geq\int_0^1\dot{r}^2\,ds\int_0^1(h+r^{-\al}U_{min})\,ds\geq\mathcal{M}_h(u_{hom}).
	\end{aligned}
	\]	
	Indeed, the last inequality is strict if $r(s)$ is not monotone in $(0,1)$, otherwise we end up with an equality, since the Maupertuis' functional is invariant under time rescaling (see the proof of Theorem \ref{thm:maupertuis}).
\end{proof}

In order to enlarge the set of those minimizers which are also unique, we are going to exploit the dynamical features of our problem. Therefore, we come back to our study started in Sections \ref{sec:coll_man}-\ref{sec:st_man}. From Lemma \ref{lem:stable_manif} and Remark \ref{rem:top_conj} we have a precise characterization of the local stable manifold $\mathcal{W}_{loc}^s$ of the fixed point $(0,\vt^*,\vt^*+\pi)$. Given a starting point $q$, the Maupertuis' functional $\mathcal{M}_h$ does not necessarily admit a unique minimizer on the class of collision paths $H_{coll}^q$ but, actually, this set of minimizers is in 1-1 correspondence with the set of their starting velocities. This fact is a consequence of the uniqueness of the solutions of the relative Cauchy problems and it suggests to establish a link between every minimizer and its initial velocity. In this way, we can introduce the set of the reparameterizations through McGehee's coordinates of every minimizer with starting position $q$ as follows
\begin{equation}\label{defn:m_h}
\mathfrak{m}_h(q)=\{\gamma_\vp:\ \mbox{rep}(\gamma_\vp)\ \mbox{minimizes}\ \mathcal{M}_h\ \mbox{in}\ H_{coll}^q,\ \mbox{with }\ \vp\in\cerchio\},
\end{equation}
with $\gamma_\vp=\gamma_\vp(\tau)$ for $\tau>0$. With this notation, the angle $\vp$ is nothing but the direction of the starting velocity, since its module is already fixed for the conservation of energy.

Following this preliminary discussion, we state and prove below our main result, which is nothing but the planar unperturbed version of the main theorem presented in the Introduction of this paper, in a negative energy shell.

\begin{theorem}\label{thm:main}
	Assume \eqref{hyp:vt_star}-\eqref{hyp:V1}. Given $h<0$, there exist $\bar{r}>0$ and $\bar\delta>0$ such that, defining 
	\[
	Q_0=\{q_0=re^{i\vt_0}:\ r\in(0,\bar{r})\ \mbox{and}\ \vt_0\in(\vt^*-\bar\delta,\vt^*+\bar\delta)\},
	\]
	and $\mathfrak{m}_h$ as in \eqref{defn:m_h}, then
	\begin{equation}\label{eq:main}
	\mathcal{W}_{loc}^s(\bar r,\bar\delta)=\bigcup\limits_{q_0\in Q_0}\mathfrak{m}_h(q_0).
	\end{equation}
	Moreover, for every $q_0\in Q_0$, the Maupertuis' functional $\mathcal{M}_h$ admits a unique minimizer in $H_{coll}^{q_0}$.
\end{theorem}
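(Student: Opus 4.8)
The plan is to establish both inclusions in \eqref{eq:main} together with the uniqueness statement in one stroke, by proving that \emph{every} Maupertuis minimizer, once read in McGehee coordinates, has its orbit contained in the local stable manifold $\mathcal{W}_{loc}^s$, and then exploiting the fact that $\mathcal{W}_{loc}^s$ is the graph $\vp=\Psi(r,\vt)$ of Lemma \ref{lem:stable_manif}. First I would fix $q_0=r_0e^{i\vt_0}\in Q_0$. By Theorem \ref{thm:coll_orb} at least one minimizer $u$ of $\mathcal{M}_h$ in $H_{coll}^{q_0}$ exists at a positive level, so Theorem \ref{thm:maupertuis} applies and $u$ is, up to the time rescaling $\omega$, a classical collision solution of \eqref{pb:an_kep} whose normalized configuration converges to a central configuration; hence the associated reparameterized orbit $\gamma_\vp$ converges, as $\tau\to+\infty$, to an equilibrium $(0,\hat\vt,\hat\vt+\pi)$ with $U'(\hat\vt)=0$. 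The crux is to show that, for $\bar r$ and $\bar\delta$ small, this limit is \emph{forced} to be the global minimizer $\vt^*$ and that the whole orbit remains inside the chart $[0,r_{loc})\times(\vt^*-\delta_{loc},\vt^*+\delta_{loc})$ of Lemma \ref{lem:stable_manif}. Granting this, the orbit lies in the global stable manifold of $(0,\vt^*,\vt^*+\pi)$ and, by confinement, in $\mathcal{W}_{loc}^s$; since the latter is a graph, every minimizer issued from $q_0$ shares the same initial direction $\vp=\Psi(r_0,\vt_0)$, and by uniqueness for the Cauchy problem of \eqref{pb:an_kep} the minimizer is unique. In particular $\bigcup_{q_0\in Q_0}\mathfrak{m}_h(q_0)\sset\mathcal{W}_{loc}^s$.

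For the reverse inclusion I would match $\bar r\leq r_{loc}$ and $\bar\delta\leq\delta_{loc}$ and observe that any point of $\mathcal{W}_{loc}^s$ has the form $(r,\vt,\Psi(r,\vt))$ with $q_0=re^{i\vt}\in Q_0$; by the previous step it is exactly the initial McGehee datum of the unique minimizer starting at $q_0$, hence it belongs to $\mathfrak{m}_h(q_0)$. Because $\mathcal{W}_{loc}^s$ is invariant for \eqref{eq:mcgehee}, the entire forward orbit through such a point stays in $\mathcal{W}_{loc}^s$, so the two sets coincide and \eqref{eq:main} follows. Thus the set equality and the uniqueness assertion both reduce to the single geometric fact ``minimizers live on the local graph''.

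The technical heart is therefore the asymptotics-plus-confinement claim. To identify the limit configuration I would argue by contradiction using the compactness Lemma \ref{lem:min_comp}: were there no admissible $\bar\delta$, there would exist $q_k$ approaching the ray $\{\vt=\vt^*\}$ whose minimizers $u_k$ do not tend to $\vt^*$; but Lemma \ref{lem:min_comp}, combined with the estimates of Corollary \ref{cor:lip}, forces $u_k$ to converge to the minimizer over $q^*=r^*e^{i\vt^*}$, which by Lemma \ref{lem:un_hom} is the monotone $\vt^*$-homothetic. The resulting $C^2$-closeness on $[0,b]$ makes $u_k$ enter an arbitrarily small neighbourhood of $(0,\vt^*,\vt^*+\pi)$. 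At this point the generalized Sundman monotonicity formula enters: it supplies a quantity that is monotone along the flow near collision and which, together with the Lagrange--Jacobi monotonicity $r'<0$ of Lemma \ref{lem:lj}, prevents the angular coordinate from oscillating out of the cone $\mathcal{C}$ of Remark \ref{rem:top_conj}. Consequently, once the orbit enters the neighbourhood of $\vt^*$ it cannot escape and must converge monotonically to the equilibrium, in accordance with the one-sided behaviour $\vp\lessgtr\vt+\pi$ recorded in Lemma \ref{lem:stable_manif}.

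I expect the main obstacle to be precisely this confinement/monotonicity argument: ruling out that a minimizing arc swings away from $\vt^*$ and then returns, which would place its initial datum on a \emph{non-local} sheet of the stable manifold and destroy the graph argument underpinning uniqueness. The delicate interplay is between the variational control on the ``outer'' part $[0,b]$ (via Lemma \ref{lem:min_comp} and Corollary \ref{cor:lip}), where the $C^2$-convergence is available but degenerates as $t\to1$, and the purely dynamical, Sundman-based control on the ``inner'' part $[b,1]$ near the singularity. Making these two regimes overlap requires choosing $\bar\delta$ \emph{after} the neighbourhood $\delta_{loc}$ furnished by the Stable Manifold Theorem, and verifying that the Sundman formula indeed holds along the minimizing arcs under \eqref{hyp:V1}--\eqref{hyp:vt_star}.
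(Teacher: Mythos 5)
Your overall skeleton (uniqueness and the inclusion $\sset$ reduced to the graph property of Lemma \ref{lem:stable_manif}, the inclusion $\supseteq$ attacked by contradiction via Lemma \ref{lem:min_comp}, Corollary \ref{cor:lip} and Lemma \ref{lem:un_hom}) coincides with the paper's, but the technical heart contains a genuine gap. The claim ``once the orbit enters the neighbourhood of $\vt^*$ it cannot escape and must converge monotonically to the equilibrium'' is precisely what has to be proved, and as stated it is false for general orbits: $(0,\vt^*,\vt^*+\pi)$ is hyperbolic with an unstable eigendirection $v^+$ tangent to the collision manifold, so orbits entering an arbitrarily small neighbourhood \emph{off} the stable manifold do leave it. The $C^2$-convergence of Lemma \ref{lem:min_comp} on $[0,b]$, $b<1$, can place a minimizer as close as you wish to the equilibrium, but it cannot decide whether its phase-space datum lies on the graph $\vp=\Psi(r,\vt)$ or slightly off it; no amount of ``closeness'' settles that. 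Moreover, the monotonicity tool you invoke is not available in the form you need: in this setting (Theorem 4 of \cite{BTVplanar}, as used in the paper) the Sundman function $v=\sqrt{U(\vt)}\cos(\vp-\vt)$ is monotone along solutions of the \emph{zero-energy} system \eqref{eq:3d_zero} --- in particular along collision-manifold orbits --- not along the $h<0$ flow followed by your minimizers; and monotonicity of a radial-velocity-type quantity does not by itself control the angle, so the step ``prevents the angular coordinate from oscillating out of the cone $\mathcal{C}$'' has no argument behind it.

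Where you appeal to confinement, the paper instead runs a two-case contradiction on a sequence $\gamma_k\not\sset\mathcal{W}_{loc}^s$ with $q_k\to q^*$. If $\vt_k\to\vt^*$ uniformly in $\tau$ but $\vp_k$ stays at distance $\bar\ve$ from $\vt^*+\pi$ at suitable times $\tau_k$, one blows up by homogeneity, $\tilde r_k(\tau)=\frac{r}{r_k(\tau_k)}r_k(\tau+\tau_k)$, which rescales the energy to $h_k=\la_k^\al h\in[h,0)$; this is exactly why Lemma \ref{lem:min_comp} and Corollary \ref{cor:lip} are formulated for sequences of energies $h_k\to h^*$ --- a feature your proposal never exploits --- and the rescaled minimizers then converge to the $\vt^*$-homothetic while $\tilde\vp_k(0)$ does not converge to $\vt^*+\pi$, contradicting Lemma \ref{lem:un_hom}. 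If instead $\vt_k$ itself oscillates by $\bar\ve$, the escape times satisfy $\tau_k\to+\infty$ and $r_k(\tau_k)\to0$, and the time-shifted orbits converge locally uniformly (Ascoli--Arzel\`a plus uniqueness for the Cauchy problem) to an orbit lying entirely \emph{on the collision manifold}, heteroclinic from $\vt^*$ to some central configuration $\vt^+$; only there does Sundman's monotonicity enter, yielding $U(\vt^+)<U(\vt^*)=U_{min}$ and contradicting the \emph{global} minimality required in \eqref{hyp:vt_star}. Note that your argument never uses global minimality of $U(\vt^*)$ at all, which is a telling symptom of the gap: non-degeneracy alone (the saddle structure) does not exclude a minimizing arc that approaches $\vt^*$ and then drifts along the collision manifold toward another central configuration, which is exactly the scenario the paper's Case 2 rules out.
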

\begin{proof}
    Fix $h<0$ and, adopting the notations of Lemma \ref{lem:stable_manif} and Lemma \ref{lem:lj}, choose 
    \[
    \bar{r}=\min\{r_{loc},r_{LJ}\}.
    \]
    
    We start with showing that the uniqueness of a minimizer follows from inclusion ($\supseteq$) in \eqref{eq:main}. Indeed, Lemma \ref{lem:stable_manif} guarantees that $\mathcal{W}_{loc}^s$ is the graph of a $\mathcal{C}^2$-function, which associates to every initial position $q_0\in Q_0$ a unique initial velocity. Therefore, once $q_0$ is fixed, there necessarily exists a unique minimal arc solving the correspondent fixed-end collision problem.
	
	\emph{Inclusion} ($\supseteq$):  Take $q_0=re^{i\vt_0}$, with $r\in(0,\bar{r})$ and $\vt_0\in(\vt^*-\delta_{loc},\vt^*+\delta_{loc})$ (as in Lemma \ref{lem:stable_manif}). Therefore, by Theorem \ref{thm:coll_orb}, there exists $\gamma=\gamma_{\vp_0}\in\mathfrak{m}_h(q_0)$ for some $\vp_0\in\cerchio$. Our goal is to show that, up to make $\delta_{loc}$ smaller, $\gamma$ is entirely contained in the local stable manifold $\mathcal{W}_{loc}^s$. The orbit $\gamma(\tau)=(r(\tau),\vt(\tau),\vp(\tau))$ solves \eqref{eq:mcgehee} in $(0,+\infty)$ so that, by the definition of stable manifold, we have
	\begin{equation}\label{thm:st_man}
	\gamma\in\mathcal{W}^s\iff\begin{cases}
	\begin{aligned}
	&r(\tau)\to 0\quad&\mbox{as}\ \tau\to+\infty \\
	&\vt(\tau)\to\vt^*\quad&\mbox{as}\ \tau\to+\infty \\
	&\vp(\tau)\to \vp^*=\vt^*+\pi\quad&\mbox{as}\ \tau\to+\infty,
	\end{aligned}
	\end{cases}
	\end{equation}
	but also, from Lemma \ref{lem:stable_manif}
	\[
	\gamma\in\mathcal{W}_{loc}^s\iff\ \text{for every}\ \tau>0,\  \vp(\tau)=\Psi(r(\tau),\vt(\tau)).
	\]
	Assume by contradiction that there exists $(\vt_k(0))_k\sset\cerchio$ such that
	\begin{equation}\label{thm:hyp}
	q_k=re^{i\vt_k(0)}\to q^*=re^{i\vt^*}\quad\mbox{as}\ k\to+\infty,
	\end{equation}
	but there exists a sequence of reparameterized minimizers $(\gamma_k)_k\sset(\mathfrak{m}_h(q_k))_k$ such that
	\[
	\gamma_k=\{\gamma_k(\tau)=(r_k(\tau),\vt_k(\tau),\vp_k(\tau)):\ \tau\geq 0\}\not\sset\mathcal{W}_{loc}^s,\quad\text{for every}\ k\in\N.
	\]		
	Notice that, from Lemma \ref{lem:min_comp} and Lemma \ref{lem:un_hom}, the sequence $(\gamma_k)_k$ converges in $H^1([0,1];\R^2)$, and thus uniformly in $[0,1]$, to the $\vt^*$-homothetic motion. For this reason, we can split our proof in two situations, whose discriminant is the uniform convergence of the sequence $(\vt_k)_k=(\vt_k(\tau))_k$. Indeed, despite the convergence of $(\gamma_k)_k$, for instance it could happen that the sequence of angle functions $(\vt_k)_k$ starts to oscillate dramatically when $k$ goes to $+\infty$.
	
	\emph{Case} 1: Assume that $\gamma_k\not\sset\mathcal{W}_{loc}^s$ for infinite $k$ and that	
	\[
	\lim\limits_{k\to+\infty}\sup\limits_{\tau\geq 0}|\vt_k(\tau)-\vt^*|=0.
	\]
	By \eqref{thm:st_man}, we necessarily have that there exists $\bar{\ve}>0$ such that
	\[
	\limsup\limits_{k\to+\infty}\sup\limits_{\tau\geq 0}|\vp_k(\tau)-\vp^*|=\bar{\ve}>0.
	\]
	In this way, up to subsequences, we can find a sequence $(\tau_k)_k\subseteq[0,+\infty)$ such that 
	\[
	|\vp_k(\tau_k)-\vp^*|=\bar\ve,\quad\mbox{for every}\ k\in\N.
	\]
	Now, we perform the following change of variables and time shifting
	\[
	\begin{cases}
	\tilde{r}_k(\tau)=\frac{r}{r_k(\tau_k)}r_k(\tau+\tau_k) \\
	\tilde{\vt}_k(\tau)=\vt_k(\tau+\tau_k) \\
	\tilde{\vp}_k(\tau)=\vp_k(\tau+\tau_k)
	\end{cases}
	\]
	and, if we define $\la_k=r_k(\tau_k)/r\leq 1$, we have that the orbit $\tilde{\gamma}_k(\tau)=(\tilde{r}_k(\tau),\tilde{\vt}_k(\tau),\tilde{\vp}_k(\tau))_{\tau\geq 0}$ solves the system
	\[
	\begin{cases}
	\tilde{r}_k'=2\tilde{r}_k(U(\tilde{\vt}_k)+h_k\tilde{r}_k^\al)\cos(\tilde{\vp}_k-\tilde{\vt}_k) \\
	\tilde{\vt}_k'=2(U(\tilde{\vt}_k)+h_k\tilde{r}_k^\al)\sin(\tilde{\vp}_k-\tilde{\vt}_k) \\
	\tilde{\vp}_k'=U'(\tilde{\vt}_k)\cos(\tilde{\vp}_k-\tilde{\vt}_k)+\al U(\tilde{\vt}_k)\sin(\tilde{\vp}_k-\tilde{\vt}_k),
	\end{cases}
	\]
	where $h_k=\la_k^\al h$ and so $h_k\in[h,0)$. Now, denoting by $x_k$ the reparameterization of the trajectory $\gamma_k$ in time $t$ in the configurations space, $x_k$ solves the problem
	\[
	\begin{cases}
	\ddot{x}_k=\nabla V(x_k) \\
	\frac12 |\dot{x}_k|^2-V(x_k)=h,
	\end{cases}
	\]
	for every $k\in\N$. Therefore, the function
	\[
	\tilde{x}_k(t)=\frac{x_k(t_k+\la_k^{1+\al/2}t)}{\la_k},
	\]
	with $t_k$ such that $\tilde{x}_k(0)=x_k(0)$, will solve the problem
	\[
	\begin{cases}
	\frac{d^2}{dt^2}{\tilde{x}_k(t)}=\nabla V(\tilde{x}_k(t)) \\
	\frac12|\frac{d}{dt}\tilde{x}_k(t)|^2-V(\tilde{x}_k(t))=h_k,
	\end{cases}
	\]
	for every $k\in\N$. Hence, under a suitable change of scales, there exists a sequence $(\tilde{u}_k)_k$ of minimizers of the Maupertuis' functional, with starting point respectively in $(\tilde q_k)_k$ such that
	\[
	\tilde{q}_k=\tilde{r}_k(0)e^{i\tilde\vt_k(0)}=re^{i\vt_k(\tau_k)}\to q^*=re^{i\vt^*}\quad\mbox{as}\ k\to+\infty.
	\]
	For Lemma \ref{lem:min_comp}, we have that such a sequence of minimizers converges in $H^1$ and thus uniformly to a minimal arc connecting $q^*$ to the origin. On the other hand, we have that $\tilde{\vp}_k(0)\not\to\vp^*=\vt^*+\pi$ as $k\to+\infty$. This means that the limit arc cannot be the $\vt^*$-homothetic trajectory, which is impossible for Proposition \ref{lem:un_hom}.
	
	\emph{Case} 2: Assume that $\gamma_k\not\in\mathcal{W}^s$ for infinite $k$ and that there exists $\bar\ve>0$ such that
	\begin{equation}\label{thm:hyp_abs}
	\limsup\limits_{k\to+\infty}\sup\limits_{\tau\geq 0}|\vt_k(\tau)-\vt^*|=\bar\ve.
	\end{equation}
	Hence, up to subsequences, there exists a sequence $(\tau_k)_k\sset[0,\infty)$ such that
	\begin{equation}\label{thm:hyp_abs2}
	|\vt_k(\tau_k)-\vt^*|=\bar{\ve},\quad\mbox{for every}\ k\in\N.
	\end{equation}
	Moreover, since from Lemma \ref{lem:min_comp} and Proposition \ref{lem:un_hom} the sequence of minimal collision arcs $(\gamma_k)_k$ converges $C^2$ to the homothetic motion on every bounded interval, we necessarily deduce that $\tau_k\to+\infty$ as $k\to+\infty$. In particular, since every $\gamma_k$ is a collision arc, again from Lemma \ref{lem:min_comp} we have that
	\begin{equation}\label{thm:step}
	r_k(\tau_k)\to 0,\quad\mbox{as}\ k\to+\infty.
	\end{equation}
	
	Now, for every $k\in\N$ define the orbit $\tilde{\gamma}_k=(\tilde{r}_k,\tilde{\vt}_k,\tilde{\vp}_k)$ such that
	\[
	\begin{cases}
	\tilde{r}_k(\tau)=r_k(\tau+\tau_k) \\
	\tilde{\vt}_k(\tau)=\vt_k(\tau+\tau_k) \\
	\tilde{\vp}_k(\tau)=\vp_k(\tau+\tau_k),
	\end{cases}
	\]
	for $\tau\in[-\tau_k,+\infty)$. We have that, for every $k\in\N$, $\tilde{\gamma}_k$ verifies equations
	\[
	\begin{cases}
	\tilde{r}_k'=2\tilde{r}_k(U(\tilde{\vt}_k)+h\tilde{r}_k^\al)\cos(\tilde{\vp}_k-\tilde{\vt}_k) \\
	\tilde{\vt}_k'=2(U(\tilde{\vt}_k)+h\tilde{r}_k^\al)\sin(\tilde{\vp}_k-\tilde{\vt}_k) \\
	\tilde{\vp}_k'=U'(\tilde{\vt}_k)\cos(\tilde{\vp}_k-\tilde{\vt}_k)+\al U(\tilde{\vt}_k)\sin(\tilde{\vp}_k-\tilde{\vt}_k).
	\end{cases}
	\]
	Since $\tau_k\to +\infty$, we have that, for every $T>0$, there exists $\bar{k}\in\N$ such that, for every $k\geq\bar{k}$
	\[
	\tau_k>T.
	\]
	Let us fix $T>0$. For $\tau\in[-T,T]$ and for every $k\geq\bar{k}$ we have
	\[
	\tau+\tau_k\in[0,+\infty)
	\]
	and so
	\[
	\tilde{r}_k(\tau)=r_k(\tau+\tau_k)\leq r_k(0)<\bar{r}.
	\]
	Moreover,
	\[
	\tilde{\vt}_k(\tau),\tilde{\vp}_k(\tau)\in\cerchio
	\] 
	and
	\[
	|\tilde{r}'_k(\tau)|\leq 2\bar{r}(U_{max}-h\bar{r}^\al)=C<+\infty
	\]
	and, with analogous calculations, the same holds for $\tilde{\vt}'_k(\tau)$ and $\tilde{\vp}'_k(\tau)$. From the Ascoli-Arzel\`a theorem, we have that 
	\[
	(\tilde{r}_k,\tilde{\vt}_k,\tilde{\vp}_k)\to(\tilde{r},\tilde{\vt},\tilde{\vp})\quad\mbox{as}\ k\to+\infty
	\]
	uniformly on $[-T,T]$. Moreover, from \eqref{thm:step} we deduce that
	\[
	-\tilde{r}_k^\al(\tau)h\leq- \tilde{r}_k^{\al}(0)h=-r_k^\al(\tau_k)h\to 0\quad\mbox{as}\ k\to+\infty.
	\] 
	This, together with the uniform convergence, implies that $(\tilde{r},\tilde{\vt},\tilde{\vp})$ satisfy the equations
	\begin{equation}\label{thm:eq_limit}
	\begin{cases}
	\tilde{r}'=2\tilde{r}U(\tilde{\vt})\cos(\tilde{\vp}-\tilde{\vt}) \\
	\tilde{\vt}'=2U(\tilde{\vt})\sin(\tilde{\vp}-\tilde{\vt}) \\
	\tilde{\vp}'=U'(\tilde{\vt})\cos(\tilde{\vp}-\tilde{\vt})+\al U(\tilde{\vt})\sin(\tilde{\vp}-\tilde{\vt}),
	\end{cases}
	\end{equation}
	on $[-T,T]$. Repeating the same argument for every $T>0$, we have that the sequences converges uniformly on every compact of $\R$, with limit defined  and verifying \eqref{thm:eq_limit} on the whole $\R$. Moreover, let us notice that
	\[
	\tilde{r}_k(0)=r_k(\tau_k)\to 0=\tilde{r}(0)\quad\mbox{as}\ k\to+\infty
	\]
	and so, for the uniqueness of the solution of a Cauchy problem, we actually deduce that $\tilde{r}(\tau)\equiv0$. This means that the solution of \eqref{thm:eq_limit} is actually a motion on the collision manifold $\{r=0\}$.
	
	Let us now investigate the asymptotic behaviour of $\tilde \vt$.  From the non-degeneracy of $\vt^*$, it is not restrictive to assume that $\bar{\ve}=d/2$ in \eqref{thm:hyp_abs}, where
	\[
	d\uguale\min\{|\vt^*-\hat{\vt}|:\,\hat{\vt}\in\cerchio,\ U'(\hat{\vt})=0,\ \hat{\vt}\neq\vt^*\}.
	\]
	In this way, from \eqref{thm:hyp_abs2}, we can deduce that for every $k\in\N$
	\[
	|\vt_k(\tau)-\vt^*|<\frac d2,\quad\mbox{for every}\ \tau\in[0,\tau_k)
	\]
	and so, in other words, for every $k\in\N$
	\[
	|\tilde{\vt}_k(\tau)-\vt^*|<\frac d2,\quad\mbox{for every}\ \tau\in[-\tau_k,0).
	\]
	From the convergence of $\tilde\vt_k$ to $\tilde{\vt}$ we can easily deduce that
	\begin{equation}\label{thm:eq1}
	|\tilde{\vt}(\tau)-\vt^*|\leq \frac d2,\quad\mbox{for every}\ \tau\in(-\infty,0).
	\end{equation}
	Now, it is known (see \cite{DevProgMath1981,BTVplanar}) that
	\[
	\lim\limits_{\tau\to\pm\infty}\tilde{\vt}(\tau)=\vt^\pm,\quad\lim_{\tau\to\pm\infty}\tilde{\vp}(\tau)=\vt^\pm+\pi,
	\]
	with $\vt^\pm$ central configuration for $U$. Assume by contradiction that $\vt^-\neq\vt^*$, i.e. that for every $\ve$ there exists $\tau_\ve\in\R$ such that 
	\[
	\tau<\tau_\ve\implies |\tilde{\vt}(\tau)-\vt^-|<\ve.
	\]
	Choosing $\ve=d/4$ and using \eqref{thm:eq1} we head to a contradiction and so necessarily $\vt^-=\vt^*$.
	
	To conclude the proof, consider the function
	\[
	v(\tau)=\sqrt{U(\tilde{\vt}(\tau))}\cos(\tilde{\vp}(\tau)-\tilde{\vt}(\tau)),
	\]
	which is non-decreasing and non-constant on every solution of \eqref{thm:eq_limit} (see Theorem 4, \cite{BTVplanar}), so that
	\[
	-\sqrt{U_{min}}=-\sqrt{U(\vt^*)}=\lim\limits_{\tau\to-\infty} v(\tau)<\lim\limits_{\tau\to+\infty}v(\tau)=-\sqrt{U(\vt^+)}.
	\]
	This is clearly a contradiction since $\vt^*$ is a global minimal central configuration for $U$.    
	
	\emph{Inclusion} $(\sset)$: From any $(r,\vt_0,\vp_0)\in\mathcal{W}_{loc}^s$ it starts a unique orbit that ends in the equilibrium point $(0,\vt^*,\vt^*+\pi)$, which connects the point $q_0=re^{i\vt_0}$ to the origin in the configuration space. This is nothing but a reparameterization of a minimal fixed-end arc: indeed, a minimizer from $q_0$ to the origin exists by means of Theorem \ref{thm:coll_orb} and it is unique, as we have already shown.    
\end{proof}

\section{General setting}\label{sec:generalizations}

This final section collects some useful remarks for the adaptation of the previous proof in the general setting presented in the introduction. This material is mainly thought to ease the reader's comprehension, for it will be clear that the argument used in the proof is exactly the same. We made the choice to split the generalization in two sub-cases. In the first one, we take into account a perturbed potential and a conservative system with possibly non-negative energy. The second one is focused on the higher dimensional case.

\subsection{$\boldsymbol{d=2}$, $\boldsymbol{W\not\equiv 0}$, $\boldsymbol{h\in\R}$}

As a first step, we set equation \eqref{eq:an_kep} again in the plane ($d=2$), but we perturb our potential $V$ exactly as stated in \eqref{hyp:V}. Moreover, we wish to work also in non-negative energy shells, so that equation \eqref{eq:energy} will be given with $h\in\R$. Using polar coordinates $(r,\vt)$ and adopting the same argument as in Section \ref{sec:coll_man}, we find an analogous of system \eqref{eq:mcgehee} in our actual setting, i.e., the dynamical system
\begin{equation}\label{eq:mcgehee_w}
\begin{cases}
r' = 2r(U(\vt)+r^\al W(r,\vt)+hr^\al) \cos (\vp-\vt) \\
\vt' = 2(U(\vt)+r^\al W(r,\vt)+hr^\al) \sin (\vp-\vt) \\
\vp' =(U'(\vt)-r^\al W_\vt)\cos(\vp-\vt) +(\al U(\vt)+r^{\al+1}W_r) \sin(\vp-\vt)
\end{cases},
\end{equation}
where we have set
\[
W_r=\frac{\partial W}{\partial r}(r,\vt),\quad W_\vt=\frac{\partial W}{\partial\vt}(r,\vt).
\]
It is easy to notice that the collision manifold $\{r=0\}$ induced by \eqref{eq:mcgehee_w} is nothing but the one described by \eqref{eq:coll_man} in Section \ref{sec:coll_man}, regardless of the sign of $h$. This fact also implies that the dynamical systems \eqref{eq:mcgehee_w} and \eqref{eq:mcgehee} share not only the same equilibrium points $(0,\vt^*,\vt^*+k\pi)$, but also the same linearization. As a consequence, with the help of minor changes, the dynamical characterization provided in Section \ref{sec:st_man} naturally extends to the setting considered above and it is possible to reformulate Lemma \ref{lem:stable_manif}. 

On second thought, basically all the proofs contained in Section \ref{sec:var} strongly depend on the \emph{Lagrange-Jacobi inequality} (Lemma \ref{lem:lj}) and its consequences. In particular, when $W\equiv0$, the $-\al$-homogeneity of $V$ and the convexity of the inertial moment allow us to provide all the useful (upper or lower) estimates on the term $h+V$. Again, this can be reset in our new framework, since the hypotheses \eqref{hyp:V} on the perturbation $W$ tell us that we can recover a $-\al$-homogeneity on $V$ when $r$ is sufficiently small. Indeed, the term $r^\al W+r^{\al+1}|\nabla W|\to 0$ as $r\to 0$, so that, eventually choosing a smaller $r_{LJ}$ in Lemma \ref{lem:lj}, we can carry out again the entire argument. 

Finally, we want to remark that a complementary choice of $h\geq 0$ is not dramatic in this setting. In particular, if $W\equiv0$, this will induce the choice $r_{LJ}=+\infty$ and thus the presence of an infinite Hill's region, as expected in a parabolic or hyperbolic problem. On the other hand, if $W\not\equiv 0$, we could still have a bound on $r_{LJ}$, depending on the sign of $W$ close to the singularity.

\subsection{$\boldsymbol{d>2}$, $\boldsymbol{W\not\equiv 0}$, $\boldsymbol{h\in\R}$}

In this higher dimensional setting, the construction presented in Sections \ref{sec:coll_man}-\ref{sec:st_man} needs to be properly modified, in order to take into account the more abstract nature of this case. We want to make once more clear that the variational approach of Section \ref{sec:var}-\ref{sec:main} is not affected by tanking into account higher dimensions. Moreover, since the discussion of the previous paragraph on the lower order perturbations does not change for $d>2$, we will assume $W\equiv 0$. In order to face the dynamical complications, we will basically adopt the technique introduced by R. McGehee in \cite{McG1974}  (see also \cite{DevInvMath1978, DevProgMath1981, BTV}) in order to sketch a proof for Lemma \ref{lem:mcgehee}. As a starting point, for $x=x(t)\in\R^d$, introduce the new variables
\[
\begin{cases}
r(t)=|x(t)| \\
s(t)=r(t)^{-1}x(t) \\
v(t)=r(t)^{\al/2}\langle\dot{x}(t),s(t)\rangle \\
u(t)=r(t)^{\al/2}\pi_{T_s\mathbb{S}^{d-1}}\dot{x}(t),
\end{cases}
\]
where $\pi_{T_s\mathbb{S}^{d-1}}$ represents the orthogonal projection on the tangent space of $\mathbb{S}^{d-1}$, i.e.,
\[
\pi_{T_s\mathbb{S}^{d-1}}z=z-\langle z,s\rangle s,\quad\mbox{for every}\ z\in\R^d.
\]
In this way, slowing down the time with the time-rescaling
\[
dt=r^{1+\al/2}d\tau,
\]
solutions of \eqref{eq:an_kep} will be equivalent to solutions of 
\begin{equation}\label{eq:dyns}
\begin{cases}
r'=rv \\
v'=\frac{\al}{2}v^2+|u|^2-\al V(s) \\
s'=u \\
u'=-\frac{2-\al}{\al}vu-|u|^2s+\nabla_T V(s),
\end{cases}
\end{equation}
where the homogeneity gives $V(x)=r^{-\al}V(s)$ and $\nabla_T V(s)=\nabla V(s)-\langle\nabla V(s),s\rangle s=\pi_{T_s\mathbb{S}^{d-1}}\nabla V(s)$ is commonly known as the tangential gradient of $V$. The conservation of energy law \eqref{eq:energy} in this variables translates to
\[
\frac12\left(|u|^2+v^2\right)-V(s)=r^\al h,
\]
and defines the energy shell
\[
\mathcal{H}_h=\left\lbrace(r,v,s,u)\in(0,+\infty)\times\R\times\mathbb{S}^{d-1}\times T_s\mathbb{S}^{d-1}:\ \frac12(|u|^2+v^2)-V(s)=r^\al h\right\rbrace\simeq\R^{2d-1}.
\]
In $\mathcal{H}_h$ the variable $v$ reads
\[
v^\pm(r,s,u)=\pm\sqrt{2(V(s)+r^\al h)-|u|^2}.
\]
The choice of $v^-/v^+$ corresponds to the choice of studying in/outgoing trajectories to/from the singularity $r=0$. Indeed, equations \eqref{eq:dyns} can be reduced to a $(r,s,u)$-system, admitting $\{r=0\}$ as an invariant set. We denote by $\Lambda$ such a set, which is commonly known as \emph{collision manifold}, which actually is a smooth manifold of dimension $2d-2$ (see \cite[Proposition 1, pag.234]{DevProgMath1981}). Since we are interested in collision trajectories, we will take into account $v^-$, so that in $\Lambda$ system \eqref{eq:dyns} reads
\begin{equation}\label{eq:coll_man_d}
	\begin{cases}
	s'=u \\
	u'= \frac{2-\al}{\al}u\sqrt{2V(s)-|u|^2}-|u|^2s+\nabla_T V(s).
	\end{cases}
\end{equation}

From the linearization of \eqref{eq:coll_man_d}, it is possible to deduce the hyperbolicity of the equilibrium points of the $(r,s,u)$-system
\[
p^*=(0,s^*,0)\quad\mbox{such that}\ \nabla_TV(s^*)=0,
\]
as far as $V$ is a Morse function (see \cite[Proposition 4, pag. 237]{DevProgMath1981}). This leads to the existence of stable and unstable manifolds $\mathcal{W}^S$ and $\mathcal{W}^U$ for $p^*$, with $\dim\mathcal{W}^S+\dim\mathcal{W}^U=2d-1$. Again for our purpose of studying ingoing collision orbits, we naturally choose the $r$-eigenvalue to be negative so that (still following \cite{DevProgMath1981}, Lemma 5, pag.238) we infer that $\dim\mathcal{W}^S=d$ while $\dim\mathcal{W}^U=d-1$.

The hyperbolicity of $p^*$ gives rise to a local description of the manifold $\mathcal{W}^S$ as the graph of a $\mathcal{C}^2$-function in the variables $(r,s)$ (see \cite{Teschl_ode}, Theorem 7.3).

\bibliography{vivinabibliog}
\bibliographystyle{plain}

\end{document}